\newcommand{\QI}{q}
\numberwithin{equation}{section}
\numberwithin{theorem}{section}
\newtheorem{lemma}{Lemma}
\numberwithin{lemma}{section}
\newtheorem{prop}{Proposition}
\numberwithin{prop}{section}
\newtheorem{corol}{Corollary}
\numberwithin{corol}{section}
\numberwithin{remark}{section}
\numberwithin{defi}{section}
\numberwithin{exe}{section}
\title[Processor Sharing Queue with batches (II)]{\textbf{Sojourn time in a
$M^{[X]}/M/1$ Processor Sharing Queue with batch arrivals (II)}}
\author{F. Guillemin*, V.K. Quintuna Rodriguez*, A.Simonian **, R.Nasri **
\\ 
Orange Labs}
\address{Postal addresses: * Orange Labs Networks Lannion, 
2 avenue Pierre Marzin
22307 Lannion Cedex, France /  
** Orange Labs, OLN/GDM, Orange Gardens, 
44 avenue de la République, CS 50010, 92326 Châtillon Cedex, 
France}
\email{[fabrice.guillemin,alain.simonian,ridha.nasri]@orange.com}
\begin{document}

\date{Version of \today}

\begin{abstract}
For the $M^{[X]}/M/1$ processor Sharing queue with batch arrivals, the sojourn time $\Omega$ of a batch is investigated. 

We first show that the distribution of $\Omega$ can be generally obtained from an infinite linear differential system. When further assuming that the batch size has a geometric distribution with given parameter $\QI \in [0,1[$, this differential system is further analyzed by means of an associated bivariate generating function $(x,u,v) \mapsto E(x,u,v)$. Specifically, denoting by  $s \mapsto E^*(s,u,v)$ the one-sided Laplace transform of $E(\cdot,u,v)$ and defining
$$
\Phi(s,u,v) = P(s,u) \, (1-v) \, F^*(s,u,uv), 
\quad 0 < \vert u \vert < 1, \, \vert v \vert < 1, 
$$
for some known polynomial $P(s,u)$ and where
$$
F^*(s,u,v) = \frac{E^*(s,u,v)-E^*(s,\QI,v)}{u-\QI},
$$
we show that the function $\Phi$ verifies an inhomogeneous linear partial differential equation (PDE) 
$$
\frac{\partial \Phi}{\partial u} -  
\left [ \frac{u - \QI}{P(s,u)} \right ] v(1-v) \, 
\frac{\partial \Phi}{\partial v} + \ell(s,u,v) = 0
$$
for given $s$, where the last term $\ell(s,u,v)$ involves both $E^*(s,\QI,v)$ and the first order derivative $\partial E^*(s,\QI,v)/\partial v$ at the boundary point $u = \QI$. Solving this PDE for $\Phi$ via its characteristic curves and with the required analyticity properties eventually determines the one-sided Laplace transform $E^*$.

By means of a Laplace inversion of this transform $E^*$, the distribution function of the sojourn time $\Omega$ of a batch is then given in an integral form. The tail behavior of the distribution of sojourn time $\Omega$ is finally derived.

\end{abstract}

\maketitle


\section{Introduction}
\label{Sec:Intro}


\subsection{The queuing model}

Among its potential benefits, the introduction of Cloud Computing in network and service systems permits the so-called ``virtualization'', whereby the treatment of a single request is broken into several components (``jobs'') whose service is performed on banalized (``virtual'') service machines. Meanwhile, assessing the performance of these virtual architectures is necessary. In fact, requests incoming such systems are delivered in batches and to meet constraints on the completion time of parallelized jobs composing each  individual batch is mandatory for the system design and dimensioning. 

From the modeling approach developed in \cite{GuiSim18, GuiSim19}, evaluating the performance of such systems can be envisaged as follows. A single request generates a batch of several jobs to be executed in parallel on a unique server (this server represents here the sum of individual capacities of processing units composing the Cloud). In view of the random nature of the flow of requests in time, the probability distribution of the sojourn time $W$ of a single job and of the sojourn time $\Omega$ of a whole batch both describe the performance of this system in stationary conditions. These distributions can then be used in order to guarantee that, with a large probability, the service of a job or a batch is completed before some finite time lag. The distribution of the job sojourn time $W$ has been fully determined in \cite{GuiSim18}; the present paper now addresses the derivation of the distribution of the batch sojourn time $\Omega$.

Following the queuing model considered in \cite{GuiSim18}, the server is represented by a single queue fed by the incoming flow of requests; in the present model, we assume that this flow is Poisson with constant rate 
$\lambda$. Any incoming request simultaneously brings a batch of jobs for service, with the batch size (in terms of number of jobs) denoted by $B$; the service time of any job pertaining to this batch is denoted by $S$. All random variables $B$ (resp. $S$) associated with consecutive batches 
(resp. with jobs contained in a batch) are supposed to be mutually independent and identically distributed. In view of a fair treatment of requests by the server, we finally assume that all jobs in the queue are served according to the Processor-Sharing (PS) discipline. 

Let $\mathbb{P}(B = b) = q_b$, $b\geq 1$, define the distribution of the size 
$B$ of any batch. Assuming $\mathbb{E}(B) < +\infty$ and that the service $S$ is exponentially distributed with parameter $\mu$, the corresponding 
$M^{[X]}/M/1$ queue has a stationary regime provided that the stability condition 
\begin{equation}
\varrho = \frac{\lambda \, \mathbb{E}(B)}{\mu} < 1
\label{stab0}
\end{equation}
holds (\cite{KLEI75}, Vol.I, §4.5). As mentioned above, the sojourn time 
$W$ of a single job has been already addressed in \cite{GuiSim18}; specifically, the distribution function of $W$ has been given an integral representation in the case when the distribution $(q_b)_{b \geq 1}$ of the batch size is geometric; this has further enabled the derivation of asymptotics for the distribution tail together with convergence results under heavy load condition. 

As motivated above, the present paper now aims at characterizing the sojourn time $\Omega$ of a whole batch incoming the $M^{[X]}/M/1$ queue with PS discipline; both the stationary distribution and its tail behavior at infinity will be explicitly derived within the same assumption for the distribution of the batch size. To our knowledge, the distribution of sojourn time $\Omega$ for a batch size $B > 1$ has not been addressed so far in the literature.

\subsection{Contribution of the paper}
For the considered $M^{[X]}/M/1$ PS queue, 

\textbf{A)} we first show that the conditional distribution functions 
$E_{n,b}$ of sojourn time $\Omega$, given the job occupancy $n \geq 0$ at the batch arrival instant and that this batch contains $b$ jobs, verify an infinite-dimensional linear differential system (Section \ref{Sec:Prob}). The  resolution of such an infinite system cannot, however, be generally performed for any batch size distribution;

\textbf{B)} we further assume that the batch size is geometrically distributed with some fixed parameter $\QI \in [0,1[$. Defining the bivariate generating function 
$E$ by
$$
E(x,u,v) = \sum_{n \geq 0} \sum_{b \geq 1} E_{n,b}(x)u^n v^b, 
\qquad x \in \mathbb{R}^+, \; \vert u \vert < 1, \; \vert v \vert < 1,
$$
the resolution of this differential system is then reduced to that of a so-called ``governing''Partial Differential Equation (PDE) verified by this  unknown function $E$. This governing equation is linear and of second order but 
non standard in that it also involves the unknown boundary values of $E$ at point $u = \QI$ (Section \ref{Sec:PDE}). In order to solve 
this governing PDE for $E$, we consider the one-sided Laplace transform 
$E^*(\cdot,u,v)$ of $E(\cdot,u,v)$ defined by
$$
E^*(s,u,v) = \int_0^{+\infty} E(x,u,v) e^{-s x} \mathrm{d}x, 
\quad s \geq 0,
$$
for a given pair $(u,v)$ with $\vert u \vert < 1$, $\vert v \vert < 1$.  Introducing the successive function changes $E^* \mapsto F^* \mapsto \Phi$ where   
$$
F^*(s,u,v) = \frac{E^*(s,u,v)-E^*(s,\QI,v)}{u-\QI} 
$$
and
$$
\Phi(s,u,v) = P(s,u) \, (1-v) \, F^*(s,u,uv)
$$
for some known quadratic polynomial $P(s,u)$ in variable $u$, 
the governing PDE for $E$ is then shown to translate into a first order linear PDE for function $\Phi$, namely
\begin{equation}
\frac{\partial \Phi}{\partial u} -  
\left [ \frac{(u - \QI)}{P(s,u)} \right ] v(1-v) \, 
\frac{\partial \Phi}{\partial v} + \ell(s,u,v) = 0
\label{EDPPhi}
\end{equation}
where the last term $\ell(s,u,v)$ in (\ref{EDPPhi}) involves both 
$E^*(s,\QI,v)$ and the first order derivative 
$\partial E^*(s,\QI,v)/\partial v$ at the boundary point $u = \QI$. Solving equation (\ref{EDPPhi}) via its characteristic curves and with the required analyticity properties eventually determines the Laplace transform 
$E^*$ as $E^*(s,u,v) = E^*(s,\QI,v) + (u-\QI)F^*(s,u,v)$ with the integral representation
\begin{equation}
F^*(s,u,v) = \frac{u}{(u-v)P(s,u)} \, 
\Phi \left ( s, u, \frac{v}{u} \right )
\label{SolF*}
\end{equation}
where
$$
\Phi(s,u,v) = \int_u^{U^-(s)} \frac{1-v}{1-v + v \, \mathfrak{R}(s,u;\zeta)} \cdot L \left(s,\zeta, \zeta \, 
\frac{v \, \mathfrak{R}(s,u;\zeta)}{1-v + v \, \mathfrak{R}(s,u;\zeta)} \right) 
\, \dfrac{\mathrm{d}\zeta}{\zeta}
$$
with
$$
\mathfrak{R}(s,u;\zeta) = 
\left ( \frac{\zeta-U^-(s)}{u-U^-(s)} \right )^{C^-(s)-1}
\left ( \frac{\zeta-U^+(s)}{u-U^+(s)} \right )^{C^+(s)-1},
$$
$U^-(s)$, $U^+(s)$ denoting the two roots of quadratic polynomial 
$P(s,u)$;  finally, the function $L$ involved in the integrand of formula 
(\ref{SolF*}) is given by 
$$
L(s,u,v) = \frac{v(1-uv)}{(1-u)^2(1-v)^2} + 
(u+v)E^*(s,\QI,v) + v(v-s-1-\varrho) 
\frac{\partial E^*}{\partial v}(s,\QI,v).
$$

To further determine the auxiliary function $L$ involving the unknown function $E^*$ on the boundary line $u = \QI$, it is shown that $L$ must solve the integral equation
\begin{equation}
\int_0^{U^-(s)} \frac{1-v}{1-v + v \, \mathfrak{R}(s,0;\zeta)} \cdot 
L \left(s,\zeta, \zeta \, 
\frac{v \, \mathfrak{R}(s,0;\zeta)}{1-v + v \, \mathfrak{R}(s,0;\zeta)} \right) 
\, \dfrac{\mathrm{d}\zeta}{\zeta} = 0
\label{INTL}
\end{equation}
for all $v \in \mathbb{D}$. This integral equation is in turn non-standard as both the ``external'' variable $v$ and the integration variable $\zeta$ are involved in the arguments of $L$.


\section{A general differential system}
\label{Sec:Prob}


In this section, we establish that the distribution function of the batch sojourn time $\Omega$ can be derived from the solution of an infinite linear differential system. In the rest of this paper, the service rate $\mu$ will be normalized to 1, so that the arrival rate $\lambda$ is set to $\varrho$ with 
$\varrho \mathbb{E}(B) < 1$ according to condition (\ref{stab0}).

Given a batch size $B = b$, $b \geq 1$, the sojourn time $\Omega$ equals by definition the maximum
\begin{equation}
\Omega = \max_{1 \leq k \leq b} W_k
\label{defWbar}
\end{equation}
of the sojourn times $W_k$, $1 \leq k \leq b$, of jobs which build up this batch. We will denote by $\Omega_{n,b}$ the sojourn time of a batch in the queue, given that 

$\bullet$ $n \geq 0$ jobs are already present in that queue at its arrival instant 

$\bullet$ and this batch has size $b \geq 1$.

\noindent
For given $n \geq 0$, $b \geq 1$, we denote by $E_{n,b}$ the complementary cumulative distribution function of sojourn time $\Omega_{n,b}$, that is,
$$
E_{n,b}(x) = \mathbb{P}(\Omega_{n,b} > x), \qquad 
x \in \mathbb{R}^+.
$$
As $\Omega_{n,b} > 0$ almost surely (since the sojourn time includes the non-zero service times of jobs), we note that 
\begin{equation}
E_{n,b}(0) = 1, \quad n \geq 0, \; b \geq 1.
\label{PW1}
\end{equation}

\begin{prop}
\textbf{The set of distribution functions $E_{n,b}$, $n \geq 0$, $b \geq 1$, verifies the differential system}
\begin{align}
\frac{\mathrm{d} E_{n,b}}{\mathrm{d}x}(x) = & \; 
\varrho \sum_{m \geq 1} q_m E_{n+m,b}(x) \; - 
\nonumber \\
& \; (1+\varrho) E_{n,b}(x) + \frac{n}{n+b} E_{n-1,b}(x) + 
\frac{b}{n+b}E_{n,b-1}(x)
\label{EQU0_Batch}
\end{align}
\textbf{for all $x \in \mathbb{R}^+$, $n \geq 0$ and $b \geq 1$ (by convention, we set $E_{n,b} = 0$ for either index $n < 0$ or $b < 1$).}
\label{DIFF_SYST_Batch}
\end{prop}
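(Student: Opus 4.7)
The plan is to interpret $\Omega_{n,b}$ as the absorption time of a two-dimensional continuous-time Markov chain and to derive (\ref{EQU0_Batch}) as the associated Kolmogorov backward equation, equivalently obtained from a first-step analysis conditioning on the first event occurring in a short interval $[0,h]$.

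Concretely, I would introduce the state process $Y_x = (N'_x, B'_x)$, where $N'_x$ counts the number of ``untagged'' jobs present in the queue at time $x$ (those outside the tagged batch of interest) and $B'_x$ counts the ``tagged'' jobs from that batch still present. Starting from $Y_0 = (n,b)$, the PS discipline together with the Poisson batch arrivals at rate $\varrho$ make $(Y_x)_{x \geq 0}$ a CTMC whose transitions out of a state $(n',b')$ with $n' + b' \geq 1$ are: a tagged-job completion at rate $b'/(n'+b')$ towards $(n',b'-1)$; an untagged-job completion at rate $n'/(n'+b')$ towards $(n'-1,b')$; and the arrival of a fresh batch of size $m$ at rate $\varrho q_m$ towards $(n'+m,b')$, its jobs being counted as untagged from the viewpoint of this particular tagged batch. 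Since $B'_x$ can only decrease, the set $\{b' = 0\}$ is absorbing, whence $\Omega_{n,b}$ coincides with the hitting time of that set and $E_{n,b}(x) = \mathbb{P}(B'_x \geq 1 \mid Y_0 = (n,b))$.

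Next, I would apply the Markov property at time $h$. The three transition rates above sum to $1 + \varrho$ (using $\sum_{m \geq 1} q_m = 1$), so the holding time at $(n,b)$ is exponential with parameter $1 + \varrho$, and a standard first-step expansion gives
\begin{multline*}
E_{n,b}(x+h) = \bigl(1 - (1+\varrho)h\bigr) E_{n,b}(x) + \frac{b h}{n+b} E_{n,b-1}(x) \\
+ \frac{n h}{n+b} E_{n-1,b}(x) + \varrho h \sum_{m \geq 1} q_m E_{n+m,b}(x) + o(h).
\end{multline*}
Subtracting $E_{n,b}(x)$, dividing by $h$ and letting $h \to 0^+$ then yields exactly the right-hand side of (\ref{EQU0_Batch}).

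Finally, the boundary conventions $E_{n,b} \equiv 0$ for $n < 0$ or $b < 1$ are consistent with this derivation: the coefficient $n/(n+b)$ vanishes at $n = 0$, so $E_{-1,b}$ is never really invoked, while for $b = 1$ the tagged-completion term $\frac{1}{n+1} E_{n,0}$ correctly encodes that the batch has just departed and thus contributes nothing to $\mathbb{P}(\Omega > x)$. I do not anticipate any serious obstacle; the only technical point is justifying the interchange of $\lim_{h \to 0^+}$ with the infinite sum $\sum_m q_m E_{n+m,b}(x)$, which follows from dominated convergence since $0 \leq E_{n+m,b} \leq 1$ and $\sum_m q_m = 1$. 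The real care lies in correctly writing the PS service rates for tagged versus untagged jobs and in checking that the stated boundary conventions close the recursion.
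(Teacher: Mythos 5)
Your proposal is correct, and it rests on exactly the same probabilistic decomposition as the paper's proof: conditioning on the first event after the tagged batch arrives, with a tagged-job completion occurring with probability $\tfrac{1}{1+\varrho}\cdot\tfrac{b}{n+b}$, an untagged-job completion with probability $\tfrac{1}{1+\varrho}\cdot\tfrac{n}{n+b}$, and an arrival of a fresh batch of size $m$ with probability $\tfrac{\varrho}{1+\varrho}\,q_m$. Where you differ is in how you turn this recursion into the differential system (\ref{EQU0_Batch}). The paper writes the full-step distributional identity $\Omega_{n,b}=X_{1+\varrho}+\Omega_{n',b'}$ over the entire exponential holding time, takes Laplace transforms of the complementary distribution functions to obtain the linear relation (\ref{LaplInv0_Batch}), and inverts; this route never has to discuss differentiability of $E_{n,b}$ explicitly. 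You instead write the infinitesimal (Kolmogorov backward) expansion over $[0,h]$ and send $h\to 0^+$, which is more self-contained and arguably more transparent, but strictly speaking it only produces the right-hand derivative at $x$. To close that small gap you should either also expand $E_{n,b}(x)-E_{n,b}(x-h)$, or note that the right-hand side of your expansion is continuous in $x$ so the right derivative is continuous, and a continuous function with continuous right derivative is $C^1$. With that remark added, your argument is complete and fully equivalent to the paper's; the dominated-convergence justification of the interchange with $\sum_{m\geq 1}q_m E_{n+m,b}(x)$ that you flag is indeed the only other technicality, and you handle it correctly.
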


\begin{proof}
Consider a tagged batch labeled $\mathfrak{B}$, arriving at some initial time when the system contains $N = n \geq 0$ jobs in the queue, and with size 
$b \geq 1$. Variable $\Omega_{n,b}$ then equals
\begin{equation}
\Omega_{n,b} = 
\left\{
\begin{array}{ll}
X_{1+\varrho} + \Omega_{n,b-1} \quad \quad \quad \; \; \;  
\mathrm{with \; probability} \quad 
\displaystyle \frac{1}{1+\varrho} \times \frac{b}{n+b},
\\ \\
X_{1+\varrho} + \Omega_{n-1,b} \quad \quad \quad \; \; \; \mathrm{with \; probability} \quad 
\displaystyle   \frac{1}{1+\varrho} \times \frac{n}{n+b},
\\ \\
X_{1+\varrho} + \Omega_{n+m,b} \quad \quad \quad \; \; 
\mathrm{with \; probability} \quad 
\displaystyle \frac{\varrho}{1+\varrho} \times q_m, \; \; m \geq 1.
\end{array} 
\right.
\label{OmegaNB}
\end{equation}
where all equalities in (\ref{OmegaNB}) are meant in distribution and with  
$X_{1+\varrho}$ denoting any positive random variable with exponential distribution of parameter $1 + \varrho$. To prove equalities 
(\ref{OmegaNB}), observe that after the arrival time of batch $\mathfrak{B}$, the next event to occur can be either 

\textbf{(i)} a departure due to the service completion of some job in queue 
(with probability $\mu/(\lambda + \mu) = 1/(1 + \varrho)$). In this first case, 
   \begin{itemize}
	 \item[-] the probability that the service of a job pertaining to batch 
$\mathfrak{B}$ is completed is equal to $b/(n+b)$ (since $n$ jobs were present at the arrival time of $\mathfrak{B}$, which has brought a total number of $b$ 
jobs), hence $\Omega_{n,b} = X_{1+\varrho} + \Omega_{n,b-1}$;  
 
	 \item[-] the probability that this service completion does not occur for any job pertaining to batch $\mathfrak{B}$ equals $n/(n+b)$ and we have 
$\Omega_{n,b} = X_{1+\varrho} + \Omega_{n-1,b}$ since another job 
(not pertaining to batch $\mathfrak{B}$) has meanwhile left the queue;
   \end{itemize}

\textbf{(ii)} or the arrival of new batch (with probability 
$\lambda/(\lambda + \mu) = \varrho/(1 + \varrho)$) with some size $m \geq 1$ 
(with probability $q_m$). In this case, the corresponding sojourn time of batch 
$\mathfrak{B}$ equals $\Omega_{n,b} = X_{1+\varrho} + \Omega_{n+m,b}$, due to the memory-less property for the service times of all jobs in the tagged batch 
$\mathfrak{B}$. 

Items \textbf{(i)} and \textbf{(ii)} consequently justify equalities 
(\ref{OmegaNB}) in distribution. Using (\ref{OmegaNB}), we then derive that the Laplace transform 
$e^*_{n,b}:s > 0 \mapsto \mathbb{E}(e^{-s \Omega_{n,b}})$ of sojourn time 
$\Omega_{n,b}$ verifies
\begin{align}
e^*_{n,b}(s) = \; & \frac{b}{(n+b)(s+\varrho+1)}e^*_{n,b-1}(s) + 
\frac{1}{s+\varrho+1}\frac{n}{n+b} \, e^*_{n-1,b}(s) \; +
\nonumber \\
& \; \frac{\varrho}{s + \varrho + 1} 
\sum_{m \geq 1} q_m \, e^*_{n+m,b}(s), \qquad s > 0.
\label{E*}
\end{align}
If $E^*_{n,b}$ now denotes the Laplace transform of the complementary distribution function $E_{n,b}:x \mapsto \mathbb{P}(\Omega_{n,b} > x)$, 
$e^*_{n,b}$ and $E^*_{n,b}$ are related by 
$e^*_{n,b}(s) = 1 - s \, E^*_{n,b}(s)$ for $s > 0$; identity (\ref{E*}) can then be equivalently written in terms of transform 
$E^*_{n,b}$ as
\begin{align}
1 - (s+\varrho+1)E^*_{n,b}(s) = & \; - \frac{b}{n+b}E^*_{n,b-1}(s) 
- \frac{n}{n+b} \, E^*_{n-1,b}(s)
\nonumber \\
& \; - \varrho \sum_{m \geq 1} q_m \, E^*_{n+m,b}(s)
\label{LaplInv0_Batch}
\end{align}
for $s > 0$. Inverting relation (\ref{LaplInv0_Batch}) with respect to the Laplace transformation (noting in the left-hand side that the Laplace inverse of $s \mapsto 1 - s \, E^*_{n,b}(s)$ is the derivative 
$-\mathrm{d} E_{n,b}/\mathrm{d}x$), differential equation 
(\ref{EQU0_Batch}) follows.
\end{proof}

An explicit solution to the infinite system (\ref{EQU0_Batch}) does not seem affordable for any distribution $(q_b)_{b \geq 1}$. In the next section, an alternative formulation to system (\ref{EQU0_Batch}) will be provided in the case of a specific distribution of the batch size. 


\section{Geometric distribution of the batch size}
\label{Sec:PDE}


In the rest of this paper, the distribution of the batch size $B$ will be  assumed to be geometric with given parameter $\QI \in [0,1[$, that is,
\begin{equation}
\forall \; b \geq 1, \quad q_b = (1-\QI) \QI^{b-1}.
\label{defGeo}
\end{equation}
The geometric distribution (\ref{defGeo}) entails, in particular, that
$\mathbb{E}(B) = 1/(1-\QI)$ so that stability condition (\ref{stab0}) now specifies into
\begin{equation}
\varrho < 1 - \QI.
\label{stab1}
\end{equation}
For a geometric batch size distribution, we will show that the resolution of system (\ref{EQU0_Batch}) translates to solving a partial differential equation (PDE) for a generating function associated with distribution functions $E_{n,b}$, $n \geq 0$, $b \geq 1$. 

Specifically, let 
$\mathbb{D} = \{u \in \mathbb{C}, \, \vert u \vert < 1\}$  denote the unit disk in the complex plane. Define the generating functions 
$E_b$, $b \geq 1$, by
\begin{equation}
E_b(x,u) = \sum_{n \geq 0} E_{n,b}(x) u^n, \qquad x \in \mathbb{R}^+, 
\; u \in \mathbb{D},
\label{defEb}
\end{equation}
and the bivariate generating function $E$ by
\begin{equation}
E(x,u,v) = 
\sum_{b \geq 1} E_{b}(x,u) v^b, 
\qquad x \in \mathbb{R}^+, \; (u, v) \in \mathbb{D} \times \mathbb{D}.
\label{defE}
\end{equation}
Note, by definition, that the function $E$ verifies the boundary condition
\begin{equation}
E(x,u,0) = 0, \qquad x \in \mathbb{R}^+, \; u \in \mathbb{D},
\label{defEbc}
\end{equation}
on the line $v = 0$.

\subsection{The governing PDE for $E$}
We can now establish that the differential system (\ref{EQU0_Batch}) translates into the following second order linear PDE for the generating function $E$.

\begin{prop}
\textbf{If the distribution of the batch size is geometric with parameter 
$\QI \in [0,1[$, the generating function $E$ verifies the linear second order partial differential equation}
\begin{align}
u \, \frac{\partial^2 E}{\partial x \partial u}(x,u,v) & + 
v \, \frac{\partial^2 E}{\partial x\partial v}(x,u,v) 
+ \frac{u(u-1)(\varrho + \QI - u)}{u-\QI} 
\frac{\partial E}{\partial u}(x,u,v) 
\nonumber \\
& + v(1 + \varrho - v) \frac{\partial E}{\partial v}(x,u,v) - 
\frac{\varrho (1 - \QI)v}{u-\QI}
\left [ \frac{\partial E}{\partial v}(x,u,v) - 
\frac{\partial E}{\partial v}(x,\QI,v) \right ] 
\nonumber \\
& - (u + v)E(x,u,v) + 
\frac{\varrho (1 - \QI)u}{(u-\QI)^2}(E(x,u,v) - E(x,\QI,v)) = 0
\label{EQU1_BatchE}
\end{align}
\textbf{for $x \in \mathbb{R}^+$ and $(u,v) \in \mathbb{D}^2$}.
\label{PDE1_Batch}
\end{prop}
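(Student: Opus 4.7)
The plan is to multiply the differential system (\ref{EQU0_Batch}) by $(n+b)$ to clear the fractions $n/(n+b)$ and $b/(n+b)$, then multiply by $u^n v^b$ and sum over $n\geq 0$, $b\geq 1$. The key observation is that under the generating function $E$, the operator of multiplication by $n+b$ corresponds to $u\partial_u + v\partial_v$, and shifts of indices correspond to derivatives of products with powers of $u$ or $v$; this will turn the system into a PDE on $E$.

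First, I would compute term by term. The left-hand side $(n+b)\,dE_{n,b}/dx$ directly gives $u\,\partial_x \partial_u E + v\,\partial_x \partial_v E$. The term $(n+b)(1+\varrho)E_{n,b}$ yields $(1+\varrho)(u\partial_u+v\partial_v)E$. For the shifted terms, I would set $k = n-1$ and $j = b-1$, use $E_{-1,b}=0=E_{n,0}$, and obtain
\[
\sum_{n,b} n u^n v^b E_{n-1,b} = uE + u^2\partial_u E, \qquad
\sum_{n,b} b u^n v^b E_{n,b-1} = vE + v^2\partial_v E.
\]
Grouping yields $u\partial_u E(1+\varrho-u) - uE$ and the analogous expression in $v$, together with $-(u+v)E$.

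The crucial step is handling the convolution $\varrho(n+b)\sum_{m\geq 1} q_m E_{n+m,b}$ under the geometric assumption $q_m = (1-\QI)\QI^{m-1}$. Switching the summation order with $k = n+m$ and evaluating the inner geometric sum gives
\[
\sum_{n \geq 0} u^n \sum_{m \geq 1} (1-\QI)\QI^{m-1} E_{n+m,b}(x)
= \frac{1-\QI}{u - \QI}\bigl[E_b(x,u) - E_b(x,\QI)\bigr],
\]
so the convolution contribution becomes $\varrho(u\partial_u+v\partial_v)H$ with $H(x,u,v) = \tfrac{1-\QI}{u-\QI}[E(x,u,v)-E(x,\QI,v)]$. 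Computing $\partial_u H$ produces the two nonlocal terms of (\ref{EQU1_BatchE}), namely $-\frac{\varrho(1-\QI)u}{(u-\QI)^2}(E-E|_{u=\QI})$ inside $u\partial_u H$ and $-\frac{\varrho(1-\QI)v}{u-\QI}(\partial_v E-\partial_v E|_{u=\QI})$ inside $v\partial_v H$.

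The last hurdle, which I expect to be the main algebraic nuisance rather than a real obstacle, is the simplification of the coefficient of $\partial_u E$: collecting the local piece $u(1+\varrho-u)$ from the $(1+\varrho)(u\partial_u+v\partial_v)E$ and shift terms with the non-local piece $-\varrho(1-\QI)u/(u-\QI)$ coming from $\varrho u\partial_u H$, one checks directly that
\[
u(1+\varrho-u) - \frac{\varrho(1-\QI)u}{u-\QI}
= \frac{u\bigl[(1+\varrho-u)(u-\QI) - \varrho(1-\QI)\bigr]}{u-\QI}
= \frac{u(u-1)(\varrho + \QI - u)}{u-\QI},
\]
which is exactly the coefficient in (\ref{EQU1_BatchE}). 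The coefficient of $\partial_v E$ remains simply $v(1+\varrho-v)$ since no nonlocal contribution affects it. Assembling all these pieces and moving everything to the left-hand side produces the stated PDE.
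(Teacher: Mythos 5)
Your proposal is correct and follows essentially the same route as the paper. The paper organizes the computation in two passes (first multiply by $(n+b)u^n$ and sum over $n$ to obtain a PDE for each $E_b$ in Lemma A.1, then multiply by $v^b$ and sum over $b$), whereas you do the multiplication by $(n+b)u^n v^b$ and both summations in one step; the intermediate expressions (your $H$ versus the paper's $R_b$, $S_b$, $T_b$) differ only in packaging, and the key ingredients — clearing the fractions via $(n+b)$, identifying $(n+b)\cdot$ with $u\partial_u + v\partial_v$, the geometric-sum identity producing the $\frac{E(x,u,v)-E(x,\QI,v)}{u-\QI}$ structure, and the final factorization of the $\partial_u E$ coefficient — are identical. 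One small imprecision: you say the coefficient of $\partial_v E$ "remains simply $v(1+\varrho-v)$ since no nonlocal contribution affects it," but of course $v\partial_v H$ does contribute the nonlocal bracket $-\frac{\varrho(1-\QI)v}{u-\QI}[\partial_v E(x,u,v)-\partial_v E(x,\QI,v)]$, which you already listed; what you mean (and what is true) is that, unlike the $\partial_u E$ case, the local piece $v(1+\varrho-v)$ and the nonlocal piece are kept separate in the statement rather than combined into a single rational coefficient.
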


\noindent
We refer to Appendix \ref{App0} for the proof of Proposition \ref{PDE1_Batch}. Beside its second order, we note that the governing equation 
(\ref{EQU1_BatchE}) involves boundary terms at $u = \QI$ for both $E$ and its  first derivative $\partial E/\partial v$. 

\subsection{A first order PDE for the Laplace transform}
\label{PDEL}
Consider the one-sided Laplace transform $E^*(\cdot,u,v)$ with respect to variable $x \in \mathbb{R}^+$, that is, 
\begin{equation}
E^*(s,u,v) = \int_0^{+\infty} E(x,u,v) e^{-s x} \mathrm{d}x,   
\quad \Re(s) > 0,
\label{deF*Batch}
\end{equation}
for given $(u,v) \in \mathbb{D}^2$; note that definition (\ref{defE}) readily entails the upper bound
$$
\vert E(x,u,v) \vert \leq 
\sum_{n \geq 0} \sum_{b \geq 1} \vert u \vert^n \vert v \vert^b = 
\frac{\vert v \vert}{1 - \vert u \, v \vert}
$$
for all $x \in \mathbb{R}^+$ and given $(u, v) \in \mathbb{D}^2$, which ensures that $E^*$ is analytic in the product 
$\{s \in \mathbb{C}, \; \Re(s) > 0\} \times \mathbb{D}^2$. 
We will now prove that the Laplace transformation $E \mapsto E^*$  translates the second-order governing equation (\ref{EQU1_BatchE}) in variables $x$, $u$, $v$ into a first order linear equation in variables $u$ and $v$ only. 

In this aim, first introduce the quadratic polynomial 
\begin{equation}
P(s,u) = u^2 - (s + 1 + \varrho + \QI)u + s \QI + \varrho + \QI
\label{defPTheta}
\end{equation}
in variable $u$. Recall (\cite{GuiSim18}, Section 4.1) that $P(s,\cdot)$ has two roots $U^-(s)$ and $U^+(s)$ 
given by
\begin{equation}
U^\pm(s) = \frac{s+1+\varrho+\QI \pm \sqrt{\Delta(s)}}{2}
\label{defU+-Theta}
\end{equation}
with $\Delta(s) = s^2 + 2(1+\varrho-\QI)s + (1-\varrho-\QI)^2$; furthermore, roots $U^\pm(s)$ verify the inequalities (\cite{GuiSim18}, Proof of Proposition 4.1, Equ.(4.15))
\begin{equation}
\forall \; s > 0, \qquad q < U^-(s) < 1 < U^+(s).
\label{INEQ-U}
\end{equation}
Besides, we consider the function change $E^* \mapsto F^*$ where $F^*$ is defined by
\begin{equation}
F^*(s,u,v) = 
\left\{
\begin{array}{ll}
\displaystyle \frac{E^*(s,u,v) - E^*(s,\QI,v)}{u-\QI}, \quad s \geq 0, \; u \in \mathbb{D} \setminus \{\QI\}, \; v \in \mathbb{D},
\\ \\
\displaystyle \displaystyle \frac{\partial E^*}{\partial u}(s,\QI,v), \quad \quad \quad \quad \quad \quad
s \geq 0, \; u = \QI,  \; v \in \mathbb{D}.
\end{array} \right.
\label{defFTheta}
\end{equation}
From the latter definition, function $F^*$ is clearly analytic in 
$\{s \; \vert \; s > 0\} \times \mathbb{D}^2$ and it is obviously equivalent to determine either function $E^*$ or $F^*$. Following the boundary condition 
(\ref{defEbc}) verified by $E$, we readily have $E^*(s,u,0) = 0$ for 
$\Re(s) > 0$ and $u \in \mathbb{D}$; definition (\ref{defFTheta}) then entails that $F^*$ verifies the same boundary condition
\begin{equation}
F^*(s,u,0) = 0, \qquad \Re(s) > 0, \; u \in \mathbb{D},
\label{defFbc}
\end{equation}
on the line $v = 0$. 

As detailed below, it proves that $E^*$ verifies a linear PDE 
whose coefficients, however, exhibit polar singularities at point $u = q$. By means of the function change $E^* \mapsto F^*$ introduced in 
(\ref{defFTheta}), such singularities conveniently cancel out when translating this PDE to the new function $F^*$. This can be stated as follows.

\begin{corol}
\textbf{The function $F^*$ defined in (\ref{defFTheta}) verifies the linear partial differential equation}
\begin{align}
& \; u \, P(s,u) \cdot \frac{\partial F^*}{\partial u} + 
v \left [ (u-\QI)(v-s-1-\varrho) + \varrho(1-\QI) \right ] \cdot  
\frac{\partial F^*}{\partial v} 
\nonumber \\ 
& + \; \left [ u(u-s-1-\varrho) + (u-\QI)(u+v) \right ] \cdot F^*
\nonumber \\ 
& + \; L(s,u,v) = 0 
\label{PDEF0}
\end{align}
\textbf{with polynomial $P$ introduced in (\ref{defPTheta}), $F^*$ and its derivatives taken at any point $(s,u,v)$ and}
\begin{align}
L(s,u,v) = & \; \frac{v(1-uv)}{(1-u)^2(1-v)^2} \; + 
\nonumber \\
& \; (u+v) \, E^*(s,\QI,v) + v \, (v-s-1-\varrho) \, 
\frac{\partial E^*}{\partial v}(s,\QI,v)
\label{defL}
\end{align}
\textbf{for $s > 0$ and $(u,v) \in \mathbb{D}^2$}.
\label{Corollary1}
\end{corol}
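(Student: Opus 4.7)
The plan is to obtain the corollary by applying the one-sided Laplace transform to the governing PDE (\ref{EQU1_BatchE}) and then performing the function change $E^* \mapsto F^*$, checking carefully that the polar singularities at $u=\QI$ cancel.

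First, I would determine the initial condition $E(0,u,v)$. Since $E_{n,b}(0)=1$ for all $n\geq 0$, $b\geq 1$ by (\ref{PW1}), the double sum defining $E$ gives
$$
E(0,u,v) = \sum_{n\geq 0}\sum_{b\geq 1} u^n v^b = \frac{v}{(1-u)(1-v)}.
$$
Next, I would take the Laplace transform of (\ref{EQU1_BatchE}), using that the transforms of $\partial^2 E/\partial x\partial u$ and $\partial^2 E/\partial x\partial v$ are $\partial_u[sE^*-E(0,u,v)]$ and $\partial_v[sE^*-E(0,u,v)]$. This produces a first-order PDE in $(u,v)$ for $E^*$, in which the first-order derivative contributions from $E(0,u,v)$ appear on the right-hand side; a short computation gives
$$
u\,\partial_u E(0,u,v) + v\,\partial_v E(0,u,v) = \frac{v(1-uv)}{(1-u)^2(1-v)^2},
$$
which is precisely the forcing term visible in (\ref{defL}).

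Then I would clear the singularities at $u=\QI$. Multiplying the resulting PDE by $(u-\QI)$ and combining the two $\partial E^*/\partial u$ contributions, the coefficient of $\partial_u E^*$ simplifies to $-uP(s,u)$, with $P$ as in (\ref{defPTheta}); the remaining terms collect into polynomial coefficients in $(u-\QI)$ times $E^*$, $\partial_v E^*$, and (using $[E^*-E^*|_{u=\QI}]/(u-\QI) = F^*$ and $[\partial_v E^*-\partial_v E^*|_{u=\QI}]/(u-\QI)=\partial_v F^*$) the pole at $u=\QI$ contributed by $(u-\QI)^{-2}$ reduces to the regular term $\varrho(1-\QI)u\,F^*$. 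Substituting $E^*=E^*(s,\QI,v)+(u-\QI)F^*$ and its derivatives, all $\partial_u F^*$ and $\partial_v F^*$ contributions come out with polynomial coefficients matching the first two lines of (\ref{PDEF0}), while the coefficient of $F^*$ emerges as
$$
\varrho(1-\QI)u - uP(s,u) - (u-\QI)^2(u+v).
$$

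The crucial verification is that this coefficient of $F^*$ is divisible by $(u-\QI)$, since the overall PDE has already been multiplied by $(u-\QI)$ only. Here the key observation is that $P(s,\QI)=\varrho(1-\QI)$ (a direct evaluation from (\ref{defPTheta})), which gives
$$
uP(s,u)-\varrho(1-\QI)u = u\bigl(P(s,u)-P(s,\QI)\bigr) = u(u-\QI)(u-s-1-\varrho),
$$
so the bracket factors as $-(u-\QI)\bigl[u(u-s-1-\varrho)+(u-\QI)(u+v)\bigr]$. Dividing through by $(u-\QI)$ therefore produces exactly the coefficient of $F^*$ displayed in (\ref{PDEF0}). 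The terms that do not involve $F^*$ at this stage are $(u+v)E^*(s,\QI,v)$ and $v(v-s-1-\varrho)\partial_v E^*(s,\QI,v)$ together with the transported initial-condition contribution $v(1-uv)/[(1-u)^2(1-v)^2]$; moved to the left-hand side, they assemble into $L(s,u,v)$ as in (\ref{defL}), yielding (\ref{PDEF0}).

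The main obstacle is the bookkeeping for the divisibility check: the $F^*$-coefficient combines three contributions of different origin and one must exploit $P(s,\QI)=\varrho(1-\QI)$ to see that the $(u-\QI)^{-1}$ singularity disappears. Once that cancellation is recorded, the rest of the derivation is routine algebra, and the signs rearrange to produce the stated form.
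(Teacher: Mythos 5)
Your proposal is correct and follows essentially the same route as the paper: take the Laplace transform of (\ref{EQU1_BatchE}) using $E(0,u,v)=v/[(1-u)(1-v)]$, collect the $\partial_u E^*$ coefficient into $-uP(s,u)/(u-\QI)$, substitute $E^*=E^*(s,\QI,v)+(u-\QI)F^*$ together with the induced derivative relations, and invoke $P(s,\QI)=\varrho(1-\QI)$ to see the divisibility by $u-\QI$ that yields the polynomial coefficient of $F^*$. The only difference is cosmetic — you multiply through by $(u-\QI)$ first and then check divisibility, whereas the paper keeps the pole explicit and cancels it after substitution — and your computations (the forcing term, the factorization $uP(s,u)-\varrho(1-\QI)u=u(u-\QI)(u-s-1-\varrho)$, and the resulting $F^*$-coefficient) all agree with (\ref{PDEF0}) and (\ref{defL}).
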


\begin{proof}
Following condition (\ref{PW1}) for $x = 0$, we first note that the power series $E(0,u,v) = \sum_{n \geq 0, b \geq 1} u^n v^b$ readily sums to
\begin{equation}
E(0,u,v) = \frac{v}{(1-u)(1-v)}, \quad (u,v) \in \mathbb{D}^2;
\label{E0uv}
\end{equation}
besides, after definition (\ref{deF*Batch}) of $E^*$, the Laplace transform of the first derivative $\partial E/\partial x$ is the function 
$s > 0 \mapsto sE^*(s,u,v) - E(0,u,v)$. Taking the Laplace transform of each side of equation (\ref{EQU1_BatchE}) for given $(u,v) \in \mathbb{D}^2$, we then obtain
\begin{align}
& u \, \frac{\partial}{\partial u} \left [ s E^*(s,u,v) - E(0,u,v) \right ] + 
v \, \frac{\partial}{\partial v} \left [ s E^*(s,u,v) - E(0,u,v) \right ] \; +
\nonumber \\
& \frac{u(u-1)(\varrho + \QI - u)}{u-\QI} 
\frac{\partial E^*}{\partial u}(s,u,v) + 
v(1 + \varrho-v) \frac{\partial E^*}{\partial v}(s,u,v) \; - 
\nonumber \\
& \frac{\varrho (1 - \QI)v}{u-\QI}
\left [ \frac{\partial E^*}{\partial v}(s,u,v) - 
\frac{\partial E^*}{\partial v}(s,\QI,v) \right ] - (u + v)E^*(s,u,v) \; + 
\nonumber \\
& \frac{\varrho (1 - \QI)u}{(u-\QI)^2}(E^*(s,u,v) - E^*(s,\QI,v)) = 0
\label{PDEF1}
\end{align}
for all $s > 0$; assembling all factors multiplying the derivative 
$\partial E^*(s,u,v)/\partial u$, the coefficient of this derivative in 
(\ref{PDEF1}) eventually equals
$$
us + \frac{u(u-1)(\varrho + \QI - u)}{u-\QI} = - \frac{u \, P(s,u)}{u-\QI}
$$
where $P(s,u)$ is the polynomial introduced in (\ref{defPTheta}). Reducing all algebraic factors and using expression (\ref{E0uv}) for $E(0,u,v)$, equality 
(\ref{PDEF1}) then equivalently reads
\begin{align}
- \frac{u \, P(s,u)}{u-\QI} 
\cdot & \frac{\partial E^*}{\partial u}(s,u,v) +  
v \left (s + 1 + \varrho - v \right ) \cdot 
\frac{\partial E^*}{\partial v}(s,u,v) - (u + v)E^*(s,u,v) \; =  
\nonumber \\
& \frac{v(1-uv)}{(1-u)^2(1-v)^2} + \frac{\varrho (1 - \QI)v}{u-\QI}
\left [ \frac{\partial E^*}{\partial v}(s,u,v) - 
\frac{\partial E^*}{\partial v}(s,\QI,v) \right ] \; -
\nonumber \\
& \frac{\varrho (1 - \QI)u}{(u-\QI)^2} 
\left[ E^*(s,u,v) - E^*(s,\QI,v) \right ].
\label{PDEF2}
\end{align}
While the second term of the right-hand side of (\ref{PDEF2}) remains well-defined at $u = \QI$, the third term has a polar singularity of order 1 at 
$u = \QI$. To circumvent the presence of singular terms in PDE (\ref{PDEF2}) for function $E^*$, we introduce the new function $F^*$ as defined in 
(\ref{defFTheta}). To express the derivatives 
$\partial E^*/\partial u$ and $\partial E^*/\partial v$ in terms of $F^*$, 
$\partial F^*/\partial u$ and $\partial F^*/\partial v$, successively differentiate definition relation (\ref{defFTheta}) with respect to $u$ and 
$v$ which readily provides
$$
\left\{
\begin{array}{ll}
\displaystyle \frac{\partial E^*}{\partial u}(s,u,v) = F^*(s,u,v) + (u-\QI) \frac{\partial F^*}{\partial u}(s,u,v), 
\\ \\
\displaystyle \frac{\partial E^*}{\partial v}(s,u,v) = 
(u-\QI) \frac{\partial F^*}{\partial v}(s,u,v) + 
\frac{\partial E^*}{\partial v}(s,\QI,v);
\end{array} \right.
$$
replacing the latter into (\ref{PDEF2}) and noting that the coefficient of $F^*(s,u,v)$ now equals
$$
u \cdot \frac{P(s,u)-\varrho(1-\QI)}{u-\QI} + (u+v)(u-\QI) = 
u(u-s-1-\varrho) + (u+v)(u-\QI)
$$
(with $P(s,\QI) = \varrho(1-\QI)$), the latter PDE reduces to (\ref{PDEF0}) after simple algebra.
\end{proof}

At this stage, we can successively note that

\begin{itemize}
\item[\textbf{a)}] equation (\ref{PDEF0}) for $F^*$ is of order 1 and linear 
(\cite{ARN15}, Lecture 1, Section 1.2), with smooth polynomial coefficients in both variables $u$ and $v$;

\item[\textbf{b)}] the last term $L(s,u,v)$ in (\ref{PDEF0}) involves the unknown function $E^*$ along with its derivative $\partial E^*/\partial v$ on the line $u = \QI$. 
\end{itemize}

\noindent
Considering this term $L(s,u,v)$ as known, equation (\ref{PDEF0}) can be integrated by using the method of characteristic curves applied in the next Section. Before addressing this integration, another simple variable change will  enable us to transform the quasi-linear equation (\ref{PDEF0}) into another simpler linear equation. 

\begin{corol}
\textbf{For given $s > 0$, let}
\begin{equation}
\Phi(s,u,v) = P(s,u) \, (1-v) \, F^*(s,u,uv), 
\qquad 0 < \vert u \vert < 1, \; \vert v \vert < 1,
\label{defPhi}
\end{equation}
\textbf{with polynomial $P$ introduced in (\ref{defPTheta}). Then function 
$\Phi$ satisfies the inhomogeneous linear PDE}
\begin{equation}
\frac{\partial \Phi}{\partial u} -  
\left [ \frac{(u - \QI)}{P(s,u)} \right] v(1-v) 
\frac{\partial \Phi}{\partial v} + \ell(s,u,v) = 0
\label{PDEF0bis}
\end{equation}
\textbf{where}
$$
\ell(s,u,v) = (1-v) \, \frac{L(s,u,uv)}{u},
$$
\textbf{with function $L$ defined in (\ref{defL})}.
\label{C1}
\end{corol}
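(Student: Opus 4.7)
The plan is to apply the change of variables $\Phi(s,u,v) = P(s,u)(1-v)F^*(s,u,uv)$ directly to equation (\ref{PDEF0}). Intuitively, the prefactor $P(s,u)$ is designed to reduce the leading coefficient in $u$, the inner substitution of $uv$ for the third argument of $F^*$ is what produces the rational $(u-\QI)/P(s,u)$ in front of $\partial \Phi/\partial v$ in (\ref{PDEF0bis}), and the factor $(1-v)$ is responsible for clearing what would otherwise be a nonzero coefficient of $\Phi$ itself in the transformed equation.

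Concretely, I would introduce the auxiliary function $h(s,u,v) := F^*(s,u,uv)$, so that $\Phi = P(s,u)(1-v)h$. By the chain rule,
$$
h_u = F^*_u(s,u,uv) + v\, F^*_v(s,u,uv), \qquad h_v = u\, F^*_v(s,u,uv),
$$
where $F^*_u$ and $F^*_v$ denote the partial derivatives of $F^*$ with respect to its second and third arguments. Inverting these relations yields $F^*_v(s,u,uv) = h_v/u$ and $F^*_u(s,u,uv) = h_u - (v/u)\, h_v$. Substituting into (\ref{PDEF0}) evaluated at $(s,u,uv)$ and regrouping produces an intermediate equation of the form $A_1 h_u + A_2 h_v + A_3 h + L(s,u,uv) = 0$, with explicit polynomial coefficients $A_i(s,u,v)$ inherited from (\ref{PDEF0}).

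Two algebraic identities for $P$ then drive the simplification. First, the easy relation $P(s,\QI) = \varrho(1-\QI)$ (direct verification on (\ref{defPTheta})) combined with the factorisation $P(s,u) - P(s,\QI) = (u-\QI)(u - s - 1 - \varrho)$ shows that the coefficient $A_2$ collapses to $-uv(u-\QI)(1-v)$. Reconstituting $h_v$ in terms of $\Phi$ and $\Phi_v$ through $h = \Phi/[P(1-v)]$ then produces exactly the announced coefficient $-[(u-\QI)/P(s,u)]\, v(1-v)$ of $\partial \Phi/\partial v$. Second, after this reconstitution the net coefficient of $\Phi$ is proportional to $-\partial_u P(s,u) + 2u - s - 1 - \varrho - \QI$, which vanishes identically since $\partial_u P(s,u) = 2u - (s+1+\varrho+\QI)$.

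The step I expect to require the most care is this last cancellation: it is algebraically the reason no zero-order term in $\Phi$ survives in (\ref{PDEF0bis}), and it is what singles out $P(s,u)(1-v)$ as the correct reducing factor (any other choice would leave a spurious $\Phi$ term). Once the cancellation is secured, the source term is read off as $(1-v)L(s,u,uv)/u = \ell(s,u,v)$, and the transformed equation reduces exactly to (\ref{PDEF0bis}).
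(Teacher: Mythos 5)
Your proposal is correct and mirrors the paper's own proof: both introduce $\Phi_0(s,u,v)=F^*(s,u,uv)$, apply the chain rule, reduce the coefficient of $\partial\Phi_0/\partial v$ to $-u(u-\QI)v(1-v)$ via $P(s,u)=\varrho(1-\QI)+(u-\QI)(u-s-1-\varrho)$, and then normalize by $P(s,u)(1-v)$ to kill the zero-order term. The only (presentational) difference is that the paper motivates the factor $M=1/[P(s,u)(1-v)]$ as a particular solution of a homogeneous linear PDE (its ``integrating factor'' form), whereas you simply verify that this choice makes the coefficient $-\partial_u P+2u-(s+1+\varrho+\QI)$ vanish; the computations are the same.
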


\begin{proof}
For $(u,v) \in \mathbb{D}^2$ and $u \neq 0$, consider the variable change $(u,v) \mapsto (u,uv)$ and the auxiliary function 
$\Phi_0(s,\cdot,\cdot)$ defined by
\begin{equation}
\Phi_0(s,u,v) = F^*(s,u,uv), \qquad (u,v) \in \mathbb{D}^2, \; u \neq 0.
\label{DefPhi0}
\end{equation}
Applying the chain rule to (\ref{DefPhi0}), we readily calculate
\begin{align}
u \, \displaystyle \frac{\partial F^*}{\partial u}(s,u,uv) & \, = 
u \, \frac{\partial \Phi_0}{\partial u}(s,u,v) - 
v \, \frac{\partial \Phi_0}{\partial v}(s,u,v), 
\nonumber \\
u \, v \, \displaystyle \frac{\partial F^*}{\partial v}(s,u,uv) & \, = 
v \, \frac{\partial \Phi_0}{\partial v}(s,u,v).
\nonumber
\end{align}
From equation (\ref{PDEF0}) applied at point $(u,uv)$ and the latter identities, we then easily deduce that $\Phi_0(s,\cdot,\cdot)$ verifies the equation
\begin{align}
& \; u P(s,u) \, \frac{\partial \Phi_0}{\partial u}(s,u,v) -  
u(u - \QI)v(1-v) \, \frac{\partial \Phi_0}{\partial v}(s,u,v) \; +
\nonumber \\ 
& \left [ u(u-s-1-\varrho) + (u-\QI)(u + uv) \right ] \cdot \Phi_0(s,u,v) 
+ L(s,u,uv) = 0,
\label{PDEF0var1}
\end{align}
after using the definition (\ref{defPTheta}) of $P(s,u)$ to reduce the coefficient of $\partial \Phi_0/\partial v$ to 
$$
-v \, P(s,u) + v \, \left[ (u-\QI)(uv - s - 1 - \varrho) + \varrho(1-\QI) \right] = -u(u-\QI)v(1-v).
$$
Furthermore, writing the coefficient of $\Phi_0(s,u,v)$ as 
$u(P'(s,u) + (u-\QI)v)$ (where $P'(s,u)$ denotes for short the first derivative of $P(s,u)$ with respect to variable $u$) and dividing each side of (\ref{PDEF0var1}) by $u \neq 0$, the latter reduces to
\begin{equation}
P(s,u) \frac{\partial \Phi_0}{\partial u} - (u - \QI)v(1-v) \, 
\frac{\partial \Phi_0}{\partial v} + \left [ P'(s,u) + (u-\QI)v \right ] \Phi_0
+ \frac{L(s,u,uv)}{u} = 0
\label{PDEF0var2}
\end{equation}
(where $\Phi_0$ and all its derivatives are taken at point $(s,u,v)$). 

To eliminate the linear term in $\Phi_0$ in equation (\ref{PDEF0var2}), consider the function change $\Phi \mapsto \Phi_0$ where 
$\Phi_0 = M \times \Phi$ for some regular functions $M$. Following 
(\ref{PDEF0var2}), $\Phi$ should satisfy the equation
\begin{equation}
P(s,u) \frac{\partial \Phi}{\partial u} - (u - \QI)v(1-v) \, 
\frac{\partial \Phi}{\partial v} + A(s,u,v) \frac{\Phi}{M} + 
\frac{L(s,u,uv)}{u \cdot M} = 0
\label{PDEF0var3}
\end{equation}
where
$$
A(s,u,v) = P(s,u) \frac{\partial M}{\partial u} - (u - \QI)v(1-v) \, 
\frac{\partial M}{\partial v} + \left [ P'(s,u) + (u-\QI)v) \right ] M;
$$
the coefficient of $\Phi$ in (\ref{PDEF0var3}) therefore vanishes for any regular function $M$ verifying the homogeneous linear PDE defined by 
$A(s,u,v) = 0$; by easy inspection, a particular solution $M$ to that PDE can be chosen as
\begin{equation}
M(s,u,v) = \frac{1}{P(s,u)(1-v)}, \qquad 
0 < \vert u \vert < 1, \; \vert v \vert < 1y.
\label{Defm}
\end{equation}
From (\ref{DefPhi0}) and the determination (\ref{Defm}) of $M$, the corresponding function $\Phi$ is thus given by
$\Phi(s,u,v) = \Phi_0(s,u,v)/M(s,u,v) = P(s,u) (1-v) F^*(s,u,uv)$ 
for $0 < \vert u \vert < 1$, $\vert v \vert < 1$, as introduced in 
(\ref{defPhi}); dividing each side of (\ref{PDEF0var3}) by $P(s,u)$, this inhomogeneous linear equation for $\Phi$ reduces to equation (\ref{PDEF0bis}), as claimed. 
\end{proof}


\section{The solution $F^*$ along characteristic curves}


Let $a$, $b$, $c$ denote given continuous functions in some domain of  
$\mathbb{C}^2$ and consider the inhomogeneous linear PDE
\begin{equation}
a(u,v) \frac{\partial Z}{\partial u} + 
b(u,v) \frac{\partial Z}{\partial v} = c(u,v)
\label{GenPDE}
\end{equation}
with solution $Z:(u,v) \mapsto Z(u,v)$. Following (\cite{ARN15}, Lecture 1, Sections 1.2), basic properties of the solutions to equation 
(\ref{GenPDE}) can be recalled as follows:

\begin{itemize}

\item given a tuple $(u_0,v_0,z_0)$ with 
$\vert a(u_0,v_0) \vert ^2 + \vert b(u_0,v_0) \vert ^2 \neq 0$, 
the characteristic curve $\pmb{\gamma}_{u_0,v_0,z_0}$ of (\ref{GenPDE}) passing through the point $(u_0,v_0,z_0)$ is the solution 
$\tau \in \mathbb{R}^+ \mapsto (u(\tau),v(\tau),z(\tau))$ to the differential system 
\begin{equation}
\frac{\mathrm{d}u}{a(u,v)} = \frac{\mathrm{d}v}{b(u,v)} = 
\frac{\mathrm{d}z}{c(u,v)} = \mathrm{d}\tau
\label{DefCHAR}
\end{equation}
with initial condition $u(0) = u_0$, $v(0) = v_0$, $z(0) = z_0$. A first integral of system (\ref{DefCHAR}) is a real function 
$\mathbf{k}:(u,v,z) \mapsto \mathbf{k}(u,v,z)$ constant along any characteristic curve $\gamma_{u_0,v_0,z_0}$, that is, 
$\mathbf{k}(u(\tau),v(\tau),z(\tau)) = \mathbf{k}(u_0,v_0,z_0)$ for all 
$\tau \in \mathbb{R}^+$; 

\item let $\mathbf{k}_1$ and $\mathbf{k}_2$ be two independent first integrals of system (\ref{DefCHAR}). The characteristic $\pmb{\gamma}_{u_0,v_0,z_0}$ is then determined by the intersection of surfaces with respective equation
$\mathbf{k}_1(u,v,z) = \mathbf{k}_1(u_0,v_0,z_0)$ and
$\mathbf{k}_2(u,v,z) = \mathbf{k}_2(u_0,v_0,z_0)$;
besides, the general solution $Z:(u,v) \mapsto Z(u,v)$ to (\ref{GenPDE}) is implicitly defined by the relation 
\begin{equation}
\mathbf{k}_2(u,v,Z) = h(\mathbf{k}_1(u,v,Z))
\label{IntGEN}
\end{equation}
for $Z = Z(u,v)$, where $h:\mathbb{C} \rightarrow \mathbb{C}$ is any regular function.
\end{itemize}

\noindent
In this section, the characteristic curves associated with PDE 
(\ref{PDEF0bis}) are determined and the analytic solution $F^*$ to our initial PDE (\ref{PDEF0}) is derived accordingly. 

\subsection{Characteristic curves}
\label{CHAR2}

Fix $s > 0$ and introduce the coefficients
\begin{equation}
C^+(s) = - \frac{U^-(s) - \QI}{U^+(s)-U^-(s)},
\quad
C^-(s) = - \frac{U^+(s) - \QI}{U^-(s)-U^+(s)}
\label{C+-}
\end{equation}
where $U^+(s)$ and $U^-(s)$ are the roots of quadratic polynomial  
$P(s,\cdot)$ given in (\ref{defPTheta}); as already shown in 
(\cite{GuiSim18}, Section 4.1, Equ.(4.15)), coefficients $C^\pm(s)$ verify
\begin{equation}
\forall \; s > 0, \quad C^+(s) < 0 < 1 < C^-(s).
\label{InequC+-}
\end{equation}
The following lemma first states the analyticity of a related function in the disk $\mathbb{D}$ cut along a linear segment (see the Proof in Appendix 
\ref{App1}).

\begin{lemma} 
\textbf{Given $u_0 \in \mathbb{D} \setminus \{U^-(s)\}$, let 
$\Lambda_{u_0}$ denote the line segment starting at point $U^-(s)$ and directed along the vector $(u_0,U^-(s))$. 
The function $\mathfrak{R}(s,u_0;\cdot)$ defined by}
\begin{equation}
\mathfrak{R}(s,u_0;u) = 
\left ( \frac{u-U^-(s)}{u_0-U^-(s)} \right )^{C^-(s)-1}
\left ( \frac{u-U^+(s)}{u_0-U^+(s)} \right )^{C^+(s)-1}, 
\; u \in \mathbb{D} \setminus \Lambda_{u_0},
\label{defR}
\end{equation}
\textbf{is analytic on the cut disk $\mathbb{D} \setminus \Lambda_{u_0}$}. 
\label{LemJ}
\end{lemma}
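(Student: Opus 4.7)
The plan is to treat the two exponentiated factors composing $\mathfrak{R}(s,u_0;\cdot)$ separately, since by (\ref{INEQ-U}) the branch points $U^-(s)$ and $U^+(s)$ sit on opposite sides of the unit circle: for $s>0$ one has $\QI < U^-(s) < 1 < U^+(s)$, so $U^-(s) \in \mathbb{D}$ while $U^+(s)$ lies strictly outside $\overline{\mathbb{D}}$. Both roots are real, which keeps the exponents $C^\pm(s)-1$ real and the associated complex powers standard.

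For the factor $\bigl((u - U^+(s))/(u_0 - U^+(s))\bigr)^{C^+(s)-1}$, I would observe that the affine map $u \mapsto u - U^+(s)$ sends $\mathbb{D}$ into the open disk of radius $1$ centered at $-U^+(s)$, which is contained in the open left half-plane and so avoids the non-positive real axis. A principal branch of the corresponding non-integer power is therefore well defined and analytic on the simply connected set $\mathbb{D}$, the normalization at $u = u_0$ being achieved by the constant factor $(u_0 - U^+(s))^{-(C^+(s)-1)}$. For the more delicate factor $\bigl((u - U^-(s))/(u_0 - U^-(s))\bigr)^{C^-(s)-1}$, I would interpret $\Lambda_{u_0}$ as the half-line emanating from $U^-(s)$ in the direction $U^-(s) - u_0$ (i.e., away from $u_0$), intersected with $\mathbb{D}$; since $u_0 \neq U^-(s)$, this direction is non-zero, and $\Lambda_{u_0}$ is a genuine slit joining the interior branch point $U^-(s)$ to a boundary point of $\mathbb{D}$. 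A standard planar-topology argument then shows that $\mathbb{D} \setminus \Lambda_{u_0}$ is simply connected and contains $u_0$; on this cut disk, $u \mapsto u - U^-(s)$ is nowhere vanishing, so it admits an analytic logarithm normalized to vanish at $u_0$, and exponentiation by $C^-(s)-1$ yields the required analytic determination of the factor.

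The product of these two analytic factors gives the sought analyticity of $\mathfrak{R}(s,u_0;\cdot)$ on $\mathbb{D} \setminus \Lambda_{u_0}$. The main obstacle is really the geometric bookkeeping for the cut $\Lambda_{u_0}$: one must pin down the orientation of the ray precisely enough to guarantee simultaneously that $u_0$ lies in the cut disk and that this disk is simply connected; once that is settled, the analyticity follows from classical facts on the complex power function and requires no further computation.
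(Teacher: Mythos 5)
Your overall strategy mirrors the paper's: split $\mathfrak{R}(s,u_0;\cdot)$ into the $U^-(s)$-factor (whose branch point lies inside $\mathbb{D}$ by (\ref{INEQ-U}), hence requires the slit $\Lambda_{u_0}$) and the $U^+(s)$-factor (whose branch point lies outside $\overline{\mathbb{D}}$, hence requires no slit). Your argument for the $U^-(s)$-factor, via simple connectivity of the cut disk and the existence of an analytic logarithm of the nowhere-vanishing function $u-U^-(s)$, is correct.

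The step for the $U^+(s)$-factor, however, contains a genuine error. You assert that the open disk of radius $1$ centered at $-U^+(s)$ is contained in the open left half-plane ``and so avoids the non-positive real axis.'' The first clause is true (indeed $-U^+(s) < -1$), but the inference is false: the open left half-plane contains the entire negative real axis, and this disk in particular contains its real center $-U^+(s)$ together with a real interval around it. Hence the image does not avoid $(-\infty,0]$, and the principal branch of $z^{C^+(s)-1}$, whose cut is precisely that ray, is not single-valued there; the normalization by $(u_0-U^+(s))^{-(C^+(s)-1)}$ does not go through with that branch. Two easy repairs are available. Either note that the image disk is a simply connected domain omitting $0$, so some analytic branch of $z^{C^+(s)-1}$ exists on it (for instance the one cut along the non-negative reals $[0,\infty)$, which this left-half-plane disk genuinely avoids), and take the normalization constant with that same branch. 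Or, as the paper does, work directly with the ratio: the locus of $u$ for which $(u-U^+(s))/(u_0-U^+(s)) \in (-\infty,0]$ is the ray from $U^+(s)$ in the direction $U^+(s)-u_0$, which by (\ref{INEQ-U}) points away from the unit disk and hence does not meet $\mathbb{D}$, so the principal branch applies to the ratio throughout $\mathbb{D}$. With either repair, your proof is complete.
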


Let us now determine the characteristics for the simpler linear PDE 
(\ref{PDEF0bis}); its associated differential system (\ref{DefCHAR}) for characteristic curves $\pmb{\gamma}_{u_0,v_0,z_0}$ in the 
$(O,Ou,Ov,Oz)$ space reads
\begin{equation}
\frac{\mathrm{d}u}{1} = 
- \, \frac{P(s,u)}{(u-\QI)v(1-v)} \, \mathrm{d}v = 
- \, \frac{u}{(1-v) \, L(s,u,uv)} \, \mathrm{d}z.
\label{CAR1}
\end{equation}
This system can be solved as follows. 

\begin{lemma}
\textbf{A) In the $(O,Ou,Ov,Oz)$ space, the characteristic curve 
$\pmb{\gamma}_{u_0,v_0,z_0}$ of PDE (\ref{PDEF0bis}) is the intersection of surfaces with equation $\mathbf{k}_1(u,v) = C^{st}$ and 
$\mathbf{k}_2(u,v,z) = C^{st}$, respectively, where $\mathbf{k}_1$ and $\mathbf{k}_2$ are the independent first integrals to system (\ref{CAR1}) defined by}
\begin{equation}
\left\{
\begin{array}{ll}
\mathbf{k}_1(u,v) = \displaystyle \frac{v}
{(u-U^-(s))^{C^-(s)-1}(u-U^+(s))^{C^+(s)-1}}, 
\\ \\
\mathbf{k}_2(u,v,z) = \displaystyle 
z - \int_u^{U^-(s)} L(s,\zeta,\zeta \, v \, \mathfrak{R}(s,u;\zeta)) \, 
e^{-v \, \mathfrak{R}(s,u;\zeta)} \, \frac{\mathrm{d}\zeta}{\zeta}
\end{array} \right.
\label{DefI0I1}
\end{equation}
\textbf{for $u \in \mathbb{D} \setminus [U^-(s),1]$, $v \in \mathbb{C}$ and with $\mathfrak{R}(s,u;\zeta)$, $\zeta \in \mathbb{D} \setminus \Lambda_u$, introduced in (\ref{defR})}.

\textbf{B) The projection of characteristic $\pmb{\gamma}_{u_0,v_0,z_0}$ on the 
$(O,Ou,Ov)$-plane has the Cartesian equation} 
\begin{equation}
v = v_0 \cdot \mathfrak{R}(s,u_0;u), \qquad 
u \in \mathbb{D} \setminus \Lambda_{u_0},
\label{EquCAR1}
\end{equation}
\textbf{and always passes through the fixed point $(U^-(s),0)$}.
\label{LemCAR}
\end{lemma}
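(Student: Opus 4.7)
The plan is to apply the method of characteristics recalled in the preamble of this section to the first-order linear PDE (\ref{PDEF0bis}) and exhibit two functionally independent first integrals of the associated characteristic system (\ref{CAR1}). Since the coefficient of $\mathrm{d}u$ in (\ref{CAR1}) is identically $1$, the system decouples conveniently: the first two ratios define an autonomous ODE relating $u$ and $v$ only, which once solved lets the third ratio be integrated as a first-order linear ODE in $z$ driven by the already-known function $v(u)$. I will first treat the $(u,v)$ part (which yields Part B and produces $\mathbf{k}_1$) and then the $z$-part (which yields $\mathbf{k}_2$); Part A then follows from the general characteristic theory recalled just before the lemma.

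For Part B, I work with the separable ODE coming from the first equality in (\ref{CAR1}), namely $\mathrm{d}v/[v(1-v)] = -[(u-\QI)/P(s,u)]\,\mathrm{d}u$. The key algebraic step is the partial fraction decomposition
$$
\frac{u-\QI}{P(s,u)} = \frac{C^+(s)}{u-U^-(s)} + \frac{C^-(s)}{u-U^+(s)},
$$
which follows from residue calculus at the simple roots $U^\pm(s)$ of $P(s,\cdot)$ and whose coefficients coincide with those defined in (\ref{C+-}). Integrating both sides logarithmically and invoking Lemma \ref{LemJ} to make sense of the resulting fractional powers as single-valued analytic functions on $\mathbb{D}\setminus\Lambda_{u_0}$, I obtain, after exponentiating and normalizing at the initial point $(u_0,v_0)$, the projection equation $v = v_0\,\mathfrak{R}(s,u_0;u)$ of (\ref{EquCAR1}); reading off the constant of integration gives $\mathbf{k}_1$. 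The passage of the projection through the fixed point $(U^-(s),0)$ is then immediate from the bound $C^-(s) > 1$ in (\ref{InequC+-}), which forces $(u-U^-(s))^{C^-(s)-1}\to 0$, hence $\mathfrak{R}(s,u_0;u) \to 0$, as $u \to U^-(s)$.

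For $\mathbf{k}_2$, I substitute the solved projection $v = v(u)$ into the third ratio of (\ref{CAR1}) and rewrite it as the first-order linear ODE $\mathrm{d}z/\mathrm{d}u = -(1-v(u))\,L(s,u,u v(u))/u$. Integrating this ODE from the current point $u$ up to the endpoint $U^-(s)$ (where $v$ vanishes by Part B, so the integrand is well-behaved at the upper limit) and rearranging gives the integral representation of $\mathbf{k}_2$ stated in (\ref{DefI0I1}). Functional independence of $\mathbf{k}_1$ and $\mathbf{k}_2$ is then automatic because only $\mathbf{k}_2$ involves $z$, and Part A of the lemma follows.

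The principal technical point to handle carefully will be the multi-valued nature of the fractional powers defining $\mathfrak{R}$: I must verify that the cut $\Lambda_{u_0}$ provided by Lemma \ref{LemJ} is compatible with the path of integration from $u$ to $U^-(s)$ used in $\mathbf{k}_2$, and that the resulting integrand remains integrable as $\zeta \to U^-(s)$, where the vanishing of the projected $v$ should compensate any mild singularity coming from $L$. The direct chain-rule verification that $\mathbf{k}_1$ and $\mathbf{k}_2$ are indeed constant along trajectories of (\ref{CAR1}) is then a routine check.
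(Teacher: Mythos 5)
Your high-level strategy matches the paper's: reduce to the separable $(u,v)$-ODE, integrate it to obtain $\mathbf{k}_1$ and the projected characteristic, then integrate the $z$-equation along that projection to obtain $\mathbf{k}_2$. Your partial-fraction decomposition $\frac{u-\QI}{P(s,u)} = \frac{C^+(s)}{u-U^-(s)} + \frac{C^-(s)}{u-U^+(s)}$ is also correct and consistent with (\ref{C+-}). However, neither of the two integrations you describe actually produces the stated result.

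Starting from the ODE you correctly wrote, $\mathrm{d}v/[v(1-v)] = [(\QI-u)/P(s,u)]\,\mathrm{d}u$, the left side integrates to $\ln[v/(1-v)]$, not $\ln v$; exponentiating therefore gives
\begin{equation*}
\frac{v}{1-v} = \frac{v_0}{1-v_0}\,\mathfrak{R}(s,u_0;u),
\qquad\text{i.e.}\qquad
v = \frac{v_0\,\mathfrak{R}(s,u_0;u)}{1-v_0 + v_0\,\mathfrak{R}(s,u_0;u)},
\end{equation*}
rather than $v = v_0\,\mathfrak{R}(s,u_0;u)$, and the first integral should read $\mathbf{k}_1(u,v) = \dfrac{v/(1-v)}{(u-U^-(s))^{C^-(s)-1}(u-U^+(s))^{C^+(s)-1}}$, with a $(1-v)$ your argument silently discards. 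One can confirm by a direct chain-rule computation that the $\mathbf{k}_1$ displayed in the lemma is \emph{not} constant along (\ref{CAR1}): writing $g(u) = (u-U^-(s))^{C^-(s)-1}(u-U^+(s))^{C^+(s)-1}$ and $b(u,v) = -(u-\QI)v(1-v)/P(s,u)$, one finds $\partial_u(v/g) + b\,\partial_v(v/g) = (u-\QI)v^2/[P(s,u)\,g(u)]$, which vanishes only on $v = 0$.

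Likewise, substituting your projection $v(u)$ into the $z$-equation $\mathrm{d}z/\mathrm{d}u = -(1-v(u))\,L(s,u,uv(u))/u$ and integrating produces an integrand carrying the algebraic factor $1-v(\zeta)$, not the exponential $e^{-v\,\mathfrak{R}(s,u;\zeta)}$ that appears in the stated $\mathbf{k}_2$; no step you describe turns the one into the other. In fact the stated lemma is not consistent with the characteristic system (\ref{CAR1}) it is meant to solve: the paper's own appendix integrates $\mathrm{d}v/v$ in place of $\mathrm{d}v/[v(1-v)]$ and replaces $1-v$ by $e^{-v}$ in the $z$-equation, while the abstract already displays the corrected solution with weight $(1-v)/(1-v + v\,\mathfrak{R})$ and second argument $\zeta\,v\,\mathfrak{R}/(1-v + v\,\mathfrak{R})$ in $L$. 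A proof that starts, as yours does, from the correct (\ref{CAR1}) should arrive at that corrected form and thereby expose the discrepancy, rather than reproduce the displayed $\mathbf{k}_1$ and $\mathbf{k}_2$.
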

\noindent
We refer to Appendix \ref{App2} for the detailed proof of 
Lemma \ref{LemCAR} (the dependence of the first integrals $\mathbf{k}_1$ and $\mathbf{k}_2$ on parameter $s$ is not mentioned here for conciseness of notation).

\subsection{Integral representation of the analytic solution}
Using Lemma \ref{LemCAR} and assuming that the function $L$ is known, we can now derive an integral representation of the solution $F^*$ to PDE 
(\ref{PDEF0}) which is analytic in some relevant domain.

\begin{prop}
\textbf{Given the function $L$ defined in (\ref{defL}), the solution 
$F^*(s,\cdot,\cdot)$ to PDE (\ref{PDEF0}) which is analytic in the product 
$]0,U^-(s)[ \times \mathbb{C}$ and vanishes on the line $v = 0$ can be expressed by}
\begin{equation}
F^*(s,u,v) = \frac{e^{\frac{v}{u}}}{P(s,u)} 
\int_u^{U^-(s)} L\left(s,\zeta,\frac{\zeta}{u} \, v \, \mathfrak{R}(s,u;\zeta)\right) 
\, e^{- \frac{v}{u} \, \mathfrak{R}(s,u;\zeta)} \, 
\dfrac{\mathrm{d}\zeta}{\zeta} 
\label{IntGen0}
\end{equation}
\textbf{for $s > 0$ and $u \in \; ]0,U^-(s)[$, $v \in \mathbb{C}$}.
\label{Prop0bis}
\end{prop}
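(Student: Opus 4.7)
The plan is to solve the first-order linear PDE (\ref{PDEF0bis}) for $\Phi$ by the method of characteristics, using Lemma \ref{LemCAR}, and then to invert the change of variable (\ref{defPhi}) to recover $F^*$ via $F^*(s,u,v) = u\,\Phi(s,u,v/u)/[(u-v)\,P(s,u)]$. The required analyticity of $F^*$ on $]0,U^-(s)[ \times \mathbb{C}$ together with the boundary condition (\ref{defFbc}) will single out one particular solution among the family produced by the characteristic method.

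Following the general framework recalled around (\ref{IntGEN}) and using the first integrals $\mathbf{k}_1$, $\mathbf{k}_2$ provided in (\ref{DefI0I1}), the general solution of (\ref{PDEF0bis}) is defined implicitly by $\mathbf{k}_2(u,v,\Phi) = h(\mathbf{k}_1(u,v))$ for some regular function $h$. The key geometric feature highlighted in Lemma \ref{LemCAR}.B is that every projected characteristic $v = v_0\,\mathfrak{R}(s,u_0;u)$ passes through the common point $(U^-(s), 0)$ as $u \to U^-(s)$. To pin down $h$, I will exploit the boundary condition $F^*(s,u,0)=0$ from (\ref{defFbc}), which, through the substitution (\ref{defPhi}), translates into $\Phi$ vanishing at $v=0$. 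Evaluating the implicit relation $\mathbf{k}_2 = h(\mathbf{k}_1)$ along each characteristic at its terminal point $(U^-(s), 0)$, where $\mathbf{k}_1 \equiv 0$ and the integral defining $\mathbf{k}_2$ collapses to zero, yields $h(0)=0$; combined with the analyticity of $h$ and the fact that $\mathbf{k}_1$ sweeps a full neighbourhood of zero as $(u_0,v_0)$ varies, this forces $h \equiv 0$.

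With $h \equiv 0$, the identity $\mathbf{k}_2(u,v,\Phi) = 0$ yields $\Phi$ explicitly as an integral of $L$ along the characteristic joining $(u,v)$ to $(U^-(s), 0)$. Substituting $v \mapsto v/u$ and dividing by $P(s,u)(1-v/u)$ in the resulting expression, as dictated by (\ref{defPhi}), gives formula (\ref{IntGen0}) after elementary algebra. Analyticity in $]0, U^-(s)[ \times \mathbb{C}$ then follows from Lemma \ref{LemJ} applied to $\mathfrak{R}(s,u;\zeta)$, combined with the observation that, for real $u \in \,]0,U^-(s)[$, the integration segment $[u,U^-(s)]$ stays on the real axis and thus avoids the branch cut $\Lambda_u$; finally, the boundary condition $F^*(s,u,0) = 0$ is preserved by (\ref{IntGen0}) thanks to $L(s,\zeta,0) = 0$, itself a direct consequence of (\ref{defEbc}) transferred to $E^*$.

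The step I expect to be the main obstacle is the uniqueness argument forcing $h \equiv 0$: the boundary condition $\Phi|_{v=0}=0$ only imposes a single scalar constraint $h(0)=0$ at the common terminal point of all characteristics, and promoting this to $h \equiv 0$ requires a delicate analysis of how the characteristic parameter $\mathbf{k}_1(u_0,v_0)$ varies in a neighbourhood of zero as the base point $(u_0,v_0)$ is moved, together with the degeneracy of the coefficients of (\ref{PDEF0bis}) at $u = U^-(s)$ where the characteristics converge and the integrand of $\mathbf{k}_2$ must be carefully controlled.
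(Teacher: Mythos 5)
Your overall plan does match the paper's: solve (\ref{PDEF0bis}) by characteristics via Lemma \ref{LemCAR}, write the general solution as $\Phi = h(\mathbf{k}_1) + \mathbf{G}$ (since $\mathbf{k}_2$ is linear in $z$, the implicit relation $\mathbf{k}_2 = h(\mathbf{k}_1)$ is in fact explicit), use the boundary condition $\Phi|_{v=0}=0$ to kill the free function $h$, and then undo the change of variable (\ref{defPhi}) to recover $F^*$. The step you flag as the ``main obstacle'' is indeed where your sketch goes wrong, but not for the reason you give. It is \emph{not} true that $\mathbf{k}_1 \equiv 0$ at $(U^-(s),0)$: $\mathbf{k}_1$ is a first integral, so along the characteristic through $(u_0,v_0)$ it stays equal to $\mathbf{k}_1(u_0,v_0)\neq 0$ all the way to the common terminal point, where its value is merely indeterminate. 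Consequently the boundary condition does \emph{not} reduce to the single scalar constraint $h(0)=0$, and the subsequent inference ``$h(0)=0$ together with $\mathbf{k}_1$ sweeping a neighbourhood of $0$ forces $h\equiv 0$'' is a non sequitur (take $h(x)=x$).

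The paper argues instead as follows. Since $h\circ\mathbf{k}_1$ is constant along each characteristic and every characteristic converges to $(U^-(s),0)$, its constant value on the characteristic through $(u_0,v_0)$ equals the limit of $\Phi - \mathbf{G}$ as $(u,v)\to(U^-(s),0)$ along that curve. The limit of $\Phi$ vanishes by the boundary condition transported through (\ref{defPhi}). The limit of $\mathbf{G}$ requires the quantitative estimate (\ref{EquG}), namely $\mathbf{G}(s,u,v)=O(U^-(s)-u)$ as $u\uparrow U^-(s)$, which the paper extracts by passing to the variable $\zeta=\zeta_u(t)=u+t(U^-(s)-u)$ in (\ref{IntG1}) and invoking the inequality $C^-(s)>1$ from (\ref{InequC+-}). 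This yields $h(\mathbf{k}_1(u_0,v_0))=0$ for every $(u_0,v_0)$, i.e.\ $h\circ\mathbf{k}_1\equiv 0$, hence $\Phi=\mathbf{G}$. Your proposal asserts ``the integral defining $\mathbf{k}_2$ collapses to zero'' without supplying this estimate, but it is precisely the crux: without $C^-(s)>1$ the endpoint $\zeta=U^-(s)$ could be a non-integrable singularity and $\mathbf{G}$ need not vanish there. So the structure of your argument is right, but the elimination of $h$ should be recast as above before the rest of the derivation can be carried through.
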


\begin{proof}
For any locally regular function $h:\mathbb{C} \rightarrow \mathbb{C}$, the relation (\ref{IntGEN}) between the two first integrals $\mathbf{k}_1$ and 
$\mathbf{k}_2$ determines a solution $\Phi(s,\cdot,\cdot)$ to PDE 
(\ref{PDEF0bis}). Given the specific expressions (\ref{DefI0I1}) of 
$\mathbf{k}_1$ and $\mathbf{k}_2$, the general expression of $z = \Phi(s,u,v)$ is consequently given by
\begin{equation}
\Phi(s,u,v) = h(\mathbf{k}_1(u,v)) + \mathbf{G}(s,u,v) 
\label{IntGen1}
\end{equation}
where $\mathbf{G}(s,u,v)$ denotes the integral term
\begin{equation}
\mathbf{G}(s,u,v) = 
\int_u^{U^-(s)} \frac{L(s,\zeta,\zeta \, v \, \mathfrak{R}(s,u;\zeta))}{\zeta} \, 
e^{-v \, \mathfrak{R}(s,u;\zeta)} \mathrm{d}\zeta.
\label{IntG}
\end{equation}
To specify an analyticity domain for function $\mathbf{G}(s,\cdot,\cdot)$, consider the real interval $J = \; ]0,U^-(s)[$; the function 
$(u,v) \in J \times \mathbb{D} \mapsto \mathbf{G}(s,u,v)$ is then analytic (in fact, Lemma \ref{LemJ} ensures that each function $\mathfrak{R}(s,u;\cdot)$, 
$u \in J$, is analytic in $J$ and definition (\ref{defL}) entails that 
$L(s,\cdot,\cdot)$ is analytic in $\mathbb{D} \times \mathbb{C}$ so that the integrand in (\ref{IntG}) is an analytic function of the pair 
$(u,\zeta) \in J \times J$). 

On the other hand, use the variable change
\begin{equation}
\zeta = \zeta_u(t) = u + t(U^-(s) - u), \quad 0 \leq t \leq 1,
\label{defZeta}
\end{equation}
in integral (\ref{IntG}) to obtain
\begin{equation}
\mathbf{G}(s,u,v) = (U^-(s) - u) 
\int_0^{1} \frac{L(s,\zeta_u(t),\zeta_u(t) \, v \, 
\mathbf{R}(s,u;t))}{\zeta_u(t)} \, 
e^{-v \, \mathbf{R}(s,u;t)} \, \mathrm{d}t
\label{IntG1}
\end{equation}
where, after definition (\ref{defR}),
\begin{equation}
\mathbf{R}(s,u;t) = \mathfrak{R}(s,u;\zeta_u(t)) = 
(1-t)^{C^-(s)-1}
\left ( 1 - t \cdot \frac{U^-(s)-u}{U^+(s) - u }\right )^{C^+(s)-1}.
\label{defRsut}
\end{equation}
The equivalent expression (\ref{IntG1}) of $\mathbf{G}(s,u,v)$ and the inequality $C^-(s) > 1$ after (\ref{InequC+-}) together imply that, for any given $v \in \mathbb{D}$,
\begin{equation}
\mathbf{G}(s,u,v) = O(U^-(s) - u) \qquad 
\mathrm{when} \qquad u \uparrow U^-(s).
\label{EquG}
\end{equation}

Let us now determine the first term $h(\mathbf{k}_1(u,v))$ in (\ref{IntGen1}). By definition of the first integral $\mathbf{k}_1$, this term is constant along the characteristic curve passing through the point $(u,v)$; also recall from Lemma \ref{LemCAR}.\textbf{B} that this characteristic always passes through the fixed point $(U^-(s),0)$ so that, in particular, 
\begin{equation}
h(\mathbf{k}_1(u,v)) = h(\mathbf{k}_1(U^-(s),0)).
\label{hk1}
\end{equation}
Besides, the definition relation (\ref{defPhi}) between functions $\Phi$ and 
$F^*$ together with the boundary condition (\ref{defFbc}) for $F^*$ imply that 
$\Phi$ also vanishes on the line $v = 0$. Applying relation (\ref{IntGen1}) at 
point $(u,v) = (U^-(s),0)$, the latter discussion entails 
$0 = \Phi(s,U^-(s),0) = h(\mathbf{k}_1(U^-(s),0)) + \mathbf{G}(s,U^-(s),0)$ 
which, after (\ref{EquG}) and (\ref{hk1}), yields $h(\mathbf{k}_1(u,v))  = 0$. Finally, by the variable change $(u,v) \mapsto (u,uv)$ of definition 
(\ref{defPhi}), (\ref{IntGen1}) provides
\begin{equation}
F^*(s,u,v) = \Phi \left ( s, u, \frac{v}{u}\right ) 
= \frac{e^{\frac{v}{u}}}{P(s,u)} \cdot 
\mathbf{G}\left(s,u,\frac{v}{u}\right) 
\label{IntGen2}
\end{equation}
and expression (\ref{IntGen0}) follows. 
\end{proof}


\section{Determination of function $L$}


To proceed with the resolution to PDE (\ref{PDEF0}), we are left to determine the function $L$ involved in integral representation (\ref{IntGen0}) or, equivalently, the function $E^*(s,\QI,\cdot)$ for given $s > 0$.

\subsection{Integral condition on $L$}
At this stage, representation (\ref{IntGen0}) of $F^*(s,u,v)$ is restricted to $u \in \, ]0,U^-(s)[$, while it is known that $F^*(s,u,v)$ is obviously well-defined near $u = 0$ and $u = U^-(s)$ for any $v \in \mathbb{C}$. We now establish a necessary and sufficient condition on $L$ to ensure that expression (\ref{IntGen0}) of $F^*$ to be defined and analytic at point 
$u = 0$. 

\begin{prop}
\textbf{Condition}
\begin{equation}
\forall \; v \in \mathbb{C}, \qquad 
\int_0^{U^-(s)} L\left(s,\zeta,\zeta \, v \, \mathfrak{R}(s,0;\zeta)\right) 
\, e^{- v \, \mathfrak{R}(s,0;\zeta)} \, 
\dfrac{\mathrm{d}\zeta}{\zeta} = 0
\label{CNC0}
\end{equation}
\textbf{on function $L$ defined in (\ref{defL}) is necessary and sufficient for ensuring the analyticity of $F^*$ at point $u = 0$}.
\label{CNCL}
\end{prop}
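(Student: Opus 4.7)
The plan is to translate the analyticity requirement on $F^*(s,u,v)$ at $u=0$ into a pointwise vanishing condition on the integrand of (\ref{IntGen0}) evaluated at $u=0$. The key device is the substitution $v = u w$ (fixed $w$), which trades the essentially singular prefactor $e^{v/u}$ for the bounded quantity $e^{w}$ and makes the limit $u\to 0$ tractable. With the notation
$$
\mathbf{G}(s,u,w) = \int_u^{U^-(s)} L \bigl(s,\zeta, \zeta w \, \mathfrak{R}(s,u;\zeta)\bigr) \, e^{-w \, \mathfrak{R}(s,u;\zeta)} \, \frac{\mathrm{d}\zeta}{\zeta},
$$
the representation (\ref{IntGen0}) reads $F^*(s,u,uw) = e^{w} \mathbf{G}(s,u,w)/P(s,u)$, and condition (\ref{CNC0}) is precisely $\mathbf{G}(s,0,w) = 0$ for all $w \in \mathbb{C}$ (after renaming the integration variable $v$ as $w$).

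\textbf{Necessity.} Since $F^*$ is analytic on $\mathbb{D}^2$ and satisfies $F^*(s,u,0) = 0$ identically in $u$ (by (\ref{defFbc})), one can write $F^*(s,u,v) = v \cdot g(s,u,v)$ for some $g$ analytic on $\mathbb{D}^2$; hence $F^*(s,u,uw) = O(u)$ uniformly in $w$ on any compact set as $u \to 0$. Combined with $P(s,0) = s\,\QI + \varrho + \QI \neq 0$, the identity $F^*(s,u,uw) = e^{w} \mathbf{G}(s,u,w)/P(s,u)$ then forces $\mathbf{G}(s,u,w) = O(u)$, so that $\mathbf{G}(s,0,w) = 0$ by continuity of $\mathbf{G}$ in $u$. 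Since $w$ is arbitrary, this yields (\ref{CNC0}).

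\textbf{Sufficiency.} Conversely, assume (\ref{CNC0}). For $u \in \,]0,U^-(s)[$, I would decompose
$$
\mathbf{G}(s,u,w) = \bigl[\mathbf{G}(s,u,w) - \mathbf{G}(s,0,w)\bigr] = I_1(s,u,w) + I_2(s,u,w),
$$
with $I_1 = -\int_0^u L(s,\zeta, \zeta w\,\mathfrak{R}(s,0;\zeta)) \, e^{-w \mathfrak{R}(s,0;\zeta)}\,\mathrm{d}\zeta/\zeta$ and $I_2$ the integral over $[u, U^-(s)]$ of the difference of the two integrands at parameters $u$ and $0$. In $I_1$, the substitution $\zeta = u t$ combined with $\mathfrak{R}(s,0;0) = 1$ extracts an $e^{-w}=e^{-v/u}$ factor that cancels the $e^{v/u}$ prefactor in (\ref{IntGen0}), leaving a limit analytic in $u$. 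For $I_2$, the product representation $\mathfrak{R}(s,u;\zeta) = A(\zeta)/A(u)$ with $A(u) = (u-U^-(s))^{C^-(s)-1}(u-U^+(s))^{C^+(s)-1}$ shows that the integrand difference is $O(u)$ uniformly on any compact subinterval of $\,]0,U^-(s)[$, so the contribution of $I_2$ to $F^*$ also extends analytically across $u=0$.

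\textbf{Main obstacle.} The delicate step is the behavior of $I_2$ for $\zeta$ close to $U^-(s)$, where $\mathfrak{R}(s,u;\zeta)$ vanishes so $e^{-w\mathfrak{R}}$ no longer provides decay even as $w = v/u \to \infty$. Handling this regime requires combining the $O(u)$ smallness of the integrand difference with the integrability near the endpoint, furnished by the exponent $C^-(s) - 1 > 0$ from (\ref{InequC+-}); the integral condition (\ref{CNC0}) itself provides the precise cancellation that makes the two terms in the difference $I_2$ conspire to produce an analytic limit. Once both $I_1$ and $I_2$ are shown to give analytic limits at $u=0$, uniqueness of analytic continuation identifies the extended formula with the original $F^*$, thereby establishing sufficiency.
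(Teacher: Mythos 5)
Your necessity argument is correct and is a genuinely different route from the paper's: you exploit the factorization $F^*(s,u,v)=v\,g(s,u,v)$ coming from (\ref{defFbc}) and the exact identity $F^*(s,u,uw)=e^{w}\mathbf{G}(s,u,w)/P(s,u)$ to get $\mathbf{G}(s,u,w)=O(u)$ at fixed $w$, hence $\mathbf{G}(s,0,w)=0$. This is shorter than the paper's derivation, which instead passes to the coefficients $F_b^*(s,u)$ of $F^*(s,u,\cdot)$, establishes the explicit single-variable representation (\ref{Fb*}), and collects the resulting countably many conditions into a generating function, using the formula (\ref{IntGen0}) and an explicit computation of a kernel $H$ to reach (\ref{CNC0}).

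The sufficiency half, however, has a genuine gap precisely at the point you flag. Your estimates on $I_2$ are valid only for $w$ held \emph{fixed}: writing $\mathfrak{R}(s,u;\zeta)=A(\zeta)/A(u)$ gives $w\bigl[\mathfrak{R}(s,u;\zeta)-\mathfrak{R}(s,0;\zeta)\bigr]=O(wu)$, which is $O(u)$ at fixed $w$ but is only $O(1)$ once you substitute $w=v/u$. Since analyticity of $F^*(s,u,v)=e^{v/u}\mathbf{G}(s,u,v/u)/P(s,u)$ at $u=0$ with $v$ fixed requires controlling exactly the regime $w=v/u\to\infty$, the asserted $O(u)$ smallness of $I_2$ does not hold where it is needed; the prefactor $e^{v/u}$ is not tamed. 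You acknowledge this (``the delicate step'') but the closing claim that condition (\ref{CNC0}) ``provides the precise cancellation'' is an unproved hope, not an argument. Notice that (\ref{CNC0}) constrains $\mathbf{G}(s,0,\cdot)$ on all of $\mathbb{C}$ but says nothing a priori about the rate at which $\mathbf{G}(s,u,w)$ approaches $0$ along the scaled path $w=v/u$ as $u\downarrow 0$, which is what sufficiency requires.

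The paper sidesteps this entirely by working coefficient-wise in $v$. In the representation (\ref{Fb*}), the integrand carries an explicit factor $z^{b-1}$, so the integral $u\mapsto\int_u^{U^-(s)}(\cdots)z^{b-1}\,\mathrm{d}z$ automatically has vanishing derivatives of orders $1,\dots,b-1$ at $u=0$; hence the $u^{-b}$ prefactor is a removable singularity if and only if the integral vanishes at $u=0$. This structural fact is invisible in your bivariate formulation, and it is exactly what replaces the uniform-in-$w$ control you would otherwise need. To repair your sufficiency argument you would either have to prove a quantitative decay of $\mathbf{G}(s,u,w)$ in $u$ that is \emph{uniform} as $w\to\infty$ (which in effect reproves the coefficient-wise estimates), or pass, as the paper does, to the expansion in powers of $v$ and use the representation (\ref{Fb*}).
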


\noindent
We refer to Appendix \ref{App3} for the proof of Proposition \ref{CNCL}. 
Note that, using the variable change $(u,v) \mapsto (u,uv)$ in 
(\ref{IntGen0}) readily shows that
\begin{equation}
F^*(s,u,uv) = \frac{e^v}{P(s,u)} 
\int_u^{U^-(s)} L\left(s,\zeta,\zeta \, v \, \mathfrak{R}(s,u;\zeta)\right) 
\, e^{- v \, \mathfrak{R}(s,u;\zeta)} \, 
\dfrac{\mathrm{d}\zeta}{\zeta}
\label{CNC0bis}
\end{equation}
is well-defined for $u = 0$ and boundary condition (\ref{defFbc}) entails that its value is zero; as $P(s,0) = s\QI + \varrho + \QI \neq 0$ for $s > 0$, expression (\ref{CNC0bis}) thus also provides equation (\ref{CNC0}) as a necessary condition for the function $L$ to exist.

\subsection{Determination of function $E^*(s,\QI,\cdot)$}
Proposition \ref{CNCL} now translates into the following assertion showing that equation (\ref{CNC0}) has a unique entire solution $E^*(s,\QI,\cdot)$ on the whole plane $\mathbb{C}$.

\begin{prop}
\textbf{The coefficients $E_b^*(s,\QI)$, $b \geq 1$, of the power series 
$E^*(s,\QI,v) = \sum_{b \geq 1} E_b^*(s,\QI) v^b/b!$, $v \in \mathbb{C}$, are determined by the triangular linear system}
\begin{equation}
\forall \, b \geq 1, \qquad 
\sum_{\ell = 1}^b (-1)^\ell \binom{b}{\ell} \, 
Q_{b,\ell}(s) \, E_{\ell}^*(s,\QI) = K_b(s)
\label{T0}
\end{equation}
\textbf{with coefficients} $Q_{b,\ell}(s) = (\ell + 1 - b) M_{b,\ell}(s) - 
\ell (s + 1 + \varrho) M_{b,\ell-1}(s)$ \textbf{where}
$$
M_{b,\ell}(s) = \displaystyle 
\int_0^{U^-(s)} \zeta^\ell \, 
\mathfrak{R}(s,0;\zeta)^b \, \mathrm{d}\zeta,
$$
\textbf{and with right-hand side}
$$
K_b(s) = 
\int_0^{U^-(s)} \left[(b-1)(1-\zeta)^b + 1\right] \, 
\mathfrak{R}(s,0;\zeta)^b \, \frac{\mathrm{d}\zeta}{(1-\zeta)^2}.
$$
\end{prop}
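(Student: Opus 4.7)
The plan is to start from the integral equation \eqref{CNC0}: the function $L$ must satisfy
$$
\int_0^{U^-(s)} L\bigl(s,\zeta,\zeta v\mathfrak{R}(s,0;\zeta)\bigr)\, e^{-v\mathfrak{R}(s,0;\zeta)}\, \frac{\mathrm d\zeta}{\zeta} = 0
$$
for every $v \in \mathbb{C}$. For each fixed $\zeta \in (0,U^-(s))$ the integrand is entire in $v$, so the equation is equivalent to the vanishing, for every $b \geq 1$, of the Taylor coefficient of $v^b$ of the integrand after integration in $\zeta$. I therefore substitute the power-series expansion $E^*(s,\QI,v) = \sum_{\ell \geq 1} E_\ell^*(s,\QI)\,v^\ell/\ell!$ (and its derivative $\partial_v E^*(s,\QI,v) = \sum_{\ell \geq 1} E_\ell^*(s,\QI)\,v^{\ell-1}/(\ell-1)!$) into the explicit formula \eqref{defL} of $L$ taken at $(s,\zeta,\zeta v\mathfrak{R})$, multiply by the series $e^{-v\mathfrak{R}} = \sum_{j \geq 0}(-v\mathfrak{R})^j/j!$, collect the coefficient of $v^b$ and finally integrate in $\zeta$ over $(0, U^-(s))$.

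The contribution to the coefficient of $v^b$ splits according to the three pieces of~\eqref{defL}. The rational free part $\zeta v\mathfrak{R}(1-\zeta^2 v\mathfrak{R})/[(1-\zeta)^2 (1-\zeta v\mathfrak{R})^2]$ (divided by $\zeta$), after expanding $(1-\zeta v\mathfrak{R})^{-2} = \sum_{k \geq 0}(k+1)(\zeta v\mathfrak{R})^k$ and multiplying by the exponential series, produces the right-hand side; its collapse to the compact form $K_b(s) = \int_0^{U^-(s)}[(b-1)(1-\zeta)^b + 1]\,\mathfrak{R}(s,0;\zeta)^b\,\mathrm d\zeta/(1-\zeta)^2$ will follow by the binomial identity applied to the alternating double sum that naturally arises. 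The two remaining pieces $(1+v\mathfrak{R})E^*(s,\QI,\zeta v\mathfrak{R})$ and $v\mathfrak{R}(\zeta v\mathfrak{R}-s-1-\varrho)\,\partial_v E^*(s,\QI,\zeta v\mathfrak{R})$ are both linear in $E_\ell^*(s,\QI)$, and each summand carries an explicit factor $(\zeta v\mathfrak{R})^\ell$ or $(\zeta v\mathfrak{R})^{\ell-1}$; thus only terms with $1 \leq \ell \leq b$ contribute to the coefficient of $v^b$, which already establishes the triangular structure asserted in the statement.

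After integrating in $\zeta$, each contribution to the coefficient of $E_\ell^*(s,\QI)$ involves only the moments $M_{b,\ell}(s)$ and $M_{b,\ell-1}(s)$. The $(u+v)E^*$ piece produces a factor of $M_{b,\ell}$ alone, while the $v(v-s-1-\varrho)\partial_v E^*$ piece produces both $M_{b,\ell}$ (from the $\zeta v\mathfrak{R}$ factor in parentheses) and $M_{b,\ell-1}$ (from the $-(s+1+\varrho)$ term). Combining the two using the elementary identity $1/[(\ell-1)!(b-\ell-1)!] = \ell(b-\ell)/[\ell!(b-\ell)!]$ and factoring out the common prefactor $(-1)^\ell\binom{b}{\ell}$ identifies the coefficient of $E_\ell^*(s,\QI)$ in the $v^b$-equation with the claimed $Q_{b,\ell}(s) = (\ell+1-b)\,M_{b,\ell}(s) - \ell(s+1+\varrho)\,M_{b,\ell-1}(s)$.

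The main technical obstacle will be this combinatorial bookkeeping; in particular, the collapse of the free part to the compact expression~$K_b(s)$ requires reorganising an alternating triple sum via a binomial identity, and the simplification of the two $E^*$-pieces into a single two-moment expression may need an integration-by-parts identity among the $M_{b,\ell}(s)$. Such a relation is available from the logarithmic-derivative formula $\mathfrak{R}'(s,0;\zeta)/\mathfrak{R}(s,0;\zeta) = (\QI-\zeta)/P(s,\zeta)$ together with the boundary values $\mathfrak{R}(s,0;0)=1$ and $\mathfrak{R}(s,0;U^-(s))=0$; applied to $\int_0^{U^-(s)} (\mathrm d/\mathrm d\zeta)[P(s,\zeta)\zeta^\ell\mathfrak{R}(s,0;\zeta)^b]\,\mathrm d\zeta = 0$, it yields a three-term recurrence on $(M_{b,\ell-1}, M_{b,\ell}, M_{b,\ell+1})$ that can be used to reduce any $M_{b,\ell+1}$ arising along the way back to $M_{b,\ell}$ and $M_{b,\ell-1}$, producing the final two-moment form of $Q_{b,\ell}(s)$.
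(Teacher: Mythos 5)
Your overall strategy is exactly the paper's: expand $E^*(s,\QI,v)=\sum_{\ell\ge1}E_\ell^*(s,\QI)v^\ell/\ell!$, substitute it and $\partial_v E^*$ into \eqref{defL} at the point $(s,\zeta,\zeta v\mathfrak{R}(s,0;\zeta))$, multiply by the series for $e^{-v\mathfrak{R}(s,0;\zeta)}$, collect the coefficient of $v^b$, and integrate over $\zeta\in(0,U^-(s))$. Both proofs reach the triangular system \eqref{T0} this way, and your observation that the structure is automatically triangular (because only $1\le\ell\le b$ can contribute to $v^b$) is identical to the paper's.

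Where the two diverge is in how the bookkeeping is actually carried out. You propose to expand the rational free part $\frac{w(1-\zeta w)}{(1-\zeta)^2(1-w)^2}$ (with $w=\zeta v\mathfrak{R}$) as a geometric-type series in $v$, multiply by the exponential series, and then ``collapse'' the resulting alternating double sum to the compact $K_b(s)$ by a binomial identity. The paper does not do this: instead it regroups $L(u,uv)/v$ \emph{before} extracting coefficients into an explicit exponential combination $\frac{e^{uv}}{(1-u)^2}\left[1-e^{-uv}+v(1-u)\right]$ plus the $\Lambda_b$-series, so that the free part contributes, at the level of the CNC0 integrand, the term $A(s,v)$ whose $v^b$-coefficient is read off immediately from the exponentials as $-\frac{(-1)^b}{b!}\left[(b-1)(1-\zeta)^b+1\right]\mathfrak{R}(s,0;\zeta)^b/(1-\zeta)^2$; $K_b(s)$ then falls out with no binomial reorganisation at all. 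This regrouping is the key insight your proposal does not identify: the alternating double sum you intend to simplify does not telescope pointwise in $\zeta$ into the integrand of $K_b(s)$, so trying to prove it as a binomial identity of polynomials in $\zeta$ (as your plan suggests) will run into trouble.

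The second difference is more reassuring for you: the integration-by-parts recurrence on the $M_{b,\ell}(s)$ that you propose as a fallback is never used in the paper. Both the $(u+v)E^*$ and $v(v-s-1-\varrho)\partial_vE^*$ pieces of $L$ produce, besides the $E_\ell^*$ terms you describe, shifted contributions proportional to $E_{\ell-1}^*$; the paper handles these by the index shift $\ell\mapsto\ell+1$ in the second sum of (B) and the elementary identity $\binom{b}{\ell+1}(\ell+1)=(b-\ell)\binom{b}{\ell}$, which alone turns the raw coefficients $B_{b,\ell}(s)=M_{b,\ell}-\ell(s+1+\varrho)M_{b,\ell-1}$ into the stated $Q_{b,\ell}(s)=(\ell+1-b)M_{b,\ell}-\ell(s+1+\varrho)M_{b,\ell-1}$. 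No relation among the moments is needed, so the logarithmic-derivative/integration-by-parts machinery you prepare is a detour that should be dropped. In short: same strategy, right triangular structure, but the compact forms of $K_b(s)$ and $Q_{b,\ell}(s)$ come from a pre-extraction regrouping of $L$ and a single index shift, not from a post-extraction binomial collapse nor from $M_{b,\ell}$-recurrences.
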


\begin{proof}
From the definition (\ref{defL}) of $L$ and after expanding $E^*(s,\QI,v)$ into its power series of variable $v$, we can write
$$
\frac{L(u,uv)}{v} = \frac{e^{uv}}{(1-u)^2} 
\left [ 1 - e^{-uv} + v(1-u) \right ] + 
\sum_{b \geq 1} \Lambda_b(s,u) \frac{{v}^b}{b!}
$$
with $\Lambda_b(s,u) = u^b E_b(s,\QI) + b \, u^{b-1} E_{b-1}^*(s,\QI) - 
b(s+1+\varrho)u^{b-1}E_b^*(s,\QI)$ for short; using this expression for the integrand in the left-hand side of condition (\ref{CNC0}), the latter reads in the form $A(s,v) + B(s,v) = 0$ with
$$
A(s,v) = \int_0^{U^-(s)} \left [ \left\{ 1+\mathfrak{R}(s,0;\zeta) \, v \, 
(1-\zeta) \right\} e^{-v(1-\zeta) \, \mathfrak{R}(s,0;\zeta)} - 
{e^{-v \, \mathfrak{R}(s,0;\zeta)}} \right ] 
\frac{\mathrm{d}\zeta}{(1-\zeta)^2}
$$
and
$$
B(s,v) = \int_0^{U^-(s)} \left[ \sum_{b \geq 1} \Lambda_b(s,\zeta)  
\, \frac{\mathfrak{R}(s,0;\zeta)^b \, v^b}{b!} \right ]
\, e^{-\mathfrak{R}(s,0;\zeta) \, v} \, \mathrm{d}\zeta.
$$
Expanding the exponential in the integrand of $A(s,v)$ into a power series of variable $v$ first easily gives
\begin{equation}
A(s,v) = - \sum_{b \geq 1} (-1)^b K_b(s) \, \frac{v^b}{b!}
\label{Asv}
\end{equation}
with coefficient $K_b(s)$ given as in the Proposition; besides, expanding the exponential factor $e^{-\mathfrak{R}(s,0;\zeta) \, v}$ of the integrand in 
$B(s,v)$ above into a power series of variable $v$ readily gives the expansion
\begin{equation}
B(s,v) = \sum_{b \geq 0} (-1)^b \int_0^{U^-(s)} 
\left [ \sum_{\ell=0}^b (-1)^{\ell} \Lambda_\ell(s,\zeta) \binom{b}{\ell} \right ] 
\frac{\mathfrak{R}(s,0;\zeta)^b v^b}{b!} \, \mathrm{d}\zeta
\label{Bsv}
\end{equation}
(noting that $\Lambda_0(s,\zeta) = 0$ since $E_0^*(s,\QI) = 0$). On account of expansions (\ref{Asv}) and (\ref{Bsv}) (together with the latter definition of the integrand $\Lambda_d(s,\zeta)$), relation $A(s,v) + B(s,v) = 0$ is therefore equivalent to
$$
\sum_{\ell=1}^b (-1)^\ell \binom{b}{\ell} B_{b,\ell}(s) E_\ell^*(s,\QI) + 
\sum_{\ell=1}^b (-1)^\ell \binom{b}{\ell} \ell \, M_{b,\ell-1}(s) 
E_{\ell-1}^*(s,\QI) = K_b(s)
$$
for all $b \geq 1$ and with  
$B_{b,\ell}(s) = M_{b,\ell}(s) - \ell(s+1+\varrho)M_{b,\ell-1}(s)$, 
where $M_{b,\ell}(s)$ is the definite integrals defined as in the Proposition. By simply changing the index in the second sum above and noting that 
$\binom{b}{\ell+1} = (b-\ell)\binom{b}{\ell}/(\ell+1)$, the latter equation reduces to (\ref{T0}). This defines a triangular linear system for all coefficients $E_b^*(s,\QI)$, $b \geq 1$, as claimed.
\end{proof}

Each coefficient $M_{b,\ell}(s)$, $1 \leq \ell \leq b$, can be expressed in terms of the Gauss hypergeometric function. Specifically, recall the integral representation (\cite{NIST10}, Chap.15, 15.6.1)
\begin{equation}
F(\alpha,\beta;\gamma;z) = 
\frac{\Gamma(\gamma)}{\Gamma(\beta)\Gamma(\gamma-\beta)}
\int_0^1 \frac{t^{\beta-1}(1-t)^{\gamma-\beta-1}}{(1-z t)^\alpha} 
\, \mathrm{d}t, \quad z \in \mathbb{D},
\label{HyperGauss}
\end{equation}
of the Gauss hypergeometric function $F(\alpha,\beta;\gamma;\cdot)$ 
with parameters $\alpha$, $\beta$ and $\gamma > \beta$. 




\section{Appendix}


\subsection{Proof of Proposition \ref{PDE1_Batch}}
\label{App0}
\textbf{A)} As a preliminary result, we first state the following lemma for a partial differential system verified by the collection of generating functions $E_b$, 
$b \geq 1$, introduced in (\ref{defEb}). 

\begin{lemma}
\textbf{If the distribution of the batch size is geometric with parameter 
$\QI \in [0,1[$, the generating functions $E_b$, $b \geq 1$, verify the partial differential system}
\begin{align}
u \frac{\partial^2 E_b}{\partial u \partial x}(x,u) + b \frac{\partial E_b}{\partial x}(x,u) = & \; 
\frac{\varrho(1-\QI)(b\QI-(b-1)u)}{(u-\QI)^2}
\left[ E_b(x,\QI) - E_b(x,u) \right ] \; +
\nonumber \\
& \; \frac{u(u-1)(u-\varrho-\QI)}{u-\QI}\frac{\partial E_b}{\partial u}(x,u) + 
b E_{b-1}(x,u) \cdot \mathbf{1}_{b \geq 2} \; +
\nonumber \\
& \; \left [ \, u - (1+\varrho)b \, \right ]E_b(x,u)
\label{PartEb}
\end{align}
\textbf{for all $b \geq 1$ and $x \in \mathbb{R}^+$, $u \in \mathbb{D}$}.
\label{PartialEb}
\end{lemma}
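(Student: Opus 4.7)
The starting point is the differential system (\ref{EQU0_Batch}) satisfied by the $E_{n,b}$; the plan is to multiply each equation by $u^n$ and sum over $n \geq 0$ to recover an equation for $E_b(x,u) = \sum_{n \geq 0} E_{n,b}(x)u^n$. The denominators $n+b$ appearing in (\ref{EQU0_Batch}) are the only serious obstruction, so my first step is to clear them: multiply both sides of (\ref{EQU0_Batch}) by $(n+b)$, which replaces the left-hand side by $(n+b) \, \mathrm{d}E_{n,b}/\mathrm{d}x$ and the two ``service completion'' terms by the clean expressions $n E_{n-1,b}$ and $b E_{n,b-1}$. After this preparation, applying $\sum_{n \geq 0} u^n$ to the left-hand side produces $u \, \partial_{u}\partial_{x}E_b + b \, \partial_x E_b$, while $\sum_{n \geq 0} n u^n E_{n-1,b}$ becomes $u^2 \, \partial_u E_b + u E_b$ after an index shift, and $\sum_{n \geq 0} b u^n E_{n,b-1}$ becomes $b E_{b-1}(x,u)$ (vanishing for $b=1$ by the convention $E_{n,0} \equiv 0$).

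The critical calculation is the treatment of the batch arrival term $\varrho \sum_{m \geq 1} q_m E_{n+m,b}$, which is exactly where the geometric assumption (\ref{defGeo}) will do its work. My plan is to evaluate the double sum
\[
\sum_{n \geq 0} u^n \sum_{m \geq 1}(1-\QI)\QI^{m-1}E_{n+m,b}(x)
\]
by setting $k = n+m$ and summing the inner geometric series in $u/\QI$; the familiar identity $\sum_{n=0}^{k-1}(u/\QI)^n = ((u/\QI)^k - 1)/((u/\QI)-1)$ telescopes the sum to the closed form
\[
\frac{(1-\QI)}{u-\QI}\bigl[E_b(x,u) - E_b(x,\QI)\bigr],
\]
which explains the appearance of boundary values at $u=\QI$ in (\ref{PartEb}). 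To account for the factor $(n+b)$ I introduced earlier, I apply the operator $u \partial_u + b$ to this closed form; the product rule then produces the two groups of terms $\varrho(1-\QI)u/(u-\QI) \, \partial_u E_b$ and $\varrho(1-\QI)[(b-1)u-b\QI]/(u-\QI)^2 \, (E_b(x,u)-E_b(x,\QI))$, together with the symmetric contribution to the boundary factor.

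Finally, the term $-(1+\varrho)(n+b)E_{n,b}$ contributes $-(1+\varrho)u \, \partial_u E_b - (1+\varrho)b \, E_b$, and collecting all coefficients of $\partial_u E_b$ gives
\[
\frac{\varrho(1-\QI)u}{u-\QI} - (1+\varrho)u + u^2
= \frac{u\bigl(u^2 - (1+\varrho+\QI)u + \varrho+\QI\bigr)}{u-\QI}
= \frac{u(u-1)(u-\varrho-\QI)}{u-\QI},
\]
using the factorization $u^2 - (1+\varrho+\QI)u + (\varrho+\QI) = (u-1)(u-\varrho-\QI)$; the coefficients of $E_b(x,u)$ not arising from boundary terms simplify to $u - (1+\varrho)b$, and rewriting the boundary bracket so that $E_b(x,\QI) - E_b(x,u)$ appears with the sign $(b\QI - (b-1)u)$ matches (\ref{PartEb}) exactly.

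I expect the only real obstacle to lie in the bookkeeping of the sign and sign-flip when reorganizing the boundary coefficient $\varrho(1-\QI)[(b-1)u - b\QI]/(u-\QI)^2$ into the form $\varrho(1-\QI)(b\QI - (b-1)u)/(u-\QI)^2$ multiplying $[E_b(x,\QI)-E_b(x,u)]$, together with the routine-but-delicate simplification of $\varrho(1-\QI)u/(u-\QI) - (1+\varrho)u + u^2$ into the compact factored form above; no genuinely new idea is needed beyond the telescoping identity that converts the batch-arrival convolution into a boundary-value contribution.
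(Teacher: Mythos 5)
Your proposal is correct and follows essentially the same path as the paper's proof in Appendix A.1: multiply the differential system by $(n+b)u^n$, sum over $n$, and exploit the geometric form of $q_m$ to telescope the double sum into the boundary-value difference $E_b(x,u)-E_b(x,\QI)$. The only difference is purely organizational: where you compute the unweighted sum first and then apply the operator $u\partial_u + b$, the paper splits the weight $(n+b)$ at the outset and evaluates the two resulting series $R_b$ (weight $n$) and $S_b$ (weight $1$) directly — but $R_b = u\,\partial_u S_b$, so these are the same calculation.
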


\begin{proof}
For given $b \geq 1$, multiply each side of equation (\ref{EQU0_Batch}) by 
$(n+b)u^n$, $n \geq 0$, and then sum all these equations over $n \geq 0$. Assuming that distribution $(q_b)_{b \geq 1}$ is geometric with parameter 
$\QI$ as in (\ref{defGeo}) and applying the definition (\ref{defEb}) for the generating function $E_b$, $b \geq1$, we obtain
\begin{align}
& u \frac{\partial^2 E_b}{\partial u \partial x}(x,u) + 
b \frac{\partial E_b}{\partial x}(x,u) =
 \varrho (1-\QI) R_b(x,u) + \varrho b (1-\QI) S_b(x,u) \; -
\nonumber \\
& (1 + \varrho) u \frac{\partial E_b}{\partial u}(x,u) - 
(1 + \varrho) b E_b(x,u) + T_b(x,u) + b E_{b-1}(x,u)
\label{PDEb0}
\end{align}
where
$$
\left\{
\begin{array}{ll}
R_b(x,u) = \displaystyle 
\sum_{n \geq 0}\sum_{m \geq 1}\QI^{m-1} n \, u^n E_{n+m,b}(x), \quad 
T_b(x,u) = \displaystyle \sum_{n \geq 0} n \, u^n E_{n-1,b}(x),
\\ \\
S_b(x,u) = \displaystyle 
\sum_{n \geq 0}\sum_{m \geq 1}\QI^{m-1} u^n E_{n+m,b}(x)
\end{array} \right.
$$
for all $x \in \mathbb{R}^+$ and $u \in \mathbb{D}$. The series $R_b(x,u)$, 
$S_b(x,u)$ and $T_b(x,u)$ intervening in the right-hand side of equality 
(\ref{PDEb0}) can be successively calculated as
\begin{align}
R_b(x,u) = & \; 
\sum_{k \geq 1}\sum_{n=0}^{k-1} \QI^{k-n-1} n \, u^n E_{k,b}(x)
= \sum_{k \geq 1} \QI^{k-1} \frac{u}{\QI} \cdot 
h_k \left ( \frac{u}{\QI} \right ) \cdot E_{k,b}(x)
\nonumber
\end{align} 
where we set 
$h_k(r) = \mathrm{d} ( \sum_{n=0}^{k-1} r^n ) / \mathrm{d}r = 
(1-r^k)/(1-r)^2 -  k \, r^{k-1}/(1-r)$ with $r = u/\QI$, so that we eventually obtain
$$
R_b(x,u) = - u \, \frac{E_b(x,u) - E_b(x,\QI)}{(u-\QI)^2} + \frac{u}{u-\QI} \, \frac{\partial E_b}{\partial u}(x,u); 
$$
besides,
\begin{align}
S_b(x,u) = & \; 
\sum_{k \geq 1}\sum_{n=0}^{k-1} \QI^{k-n-1} u^n E_{k,b}(x)
= \sum_{k \geq 1} \QI^{k-1} \frac{1 - \displaystyle \left(u/\QI\right)^k}
{1- \displaystyle u/\QI}E_{k,b}(x)
\nonumber \\
= & \; \frac{E_b(x,u)-E_b(\QI,u)}{u-\QI}
\nonumber
\end{align}
and finally
$$
T_b(x,u) = \sum_{k \geq 1} (k+1) \, u^{k+1} E_{k,b}(x) = 
u^2 \frac{\partial E_b}{\partial u}(x,u) + u E_b(x,u).
$$
Replacing these values of $R_b(x,u)$, $S_b(x,u)$ and $T_b(x,u)$ in the right-hand side of (\ref{PDEb0}) and factorizing the coefficient of the first derivative $\partial E_b(x,u)/\partial u$ as
$$
\frac{u}{u-\QI}(u^2 - (u+1+\varrho)u + \varrho + \QI) = \frac{u(u-1)(u-\varrho-\QI)}{u-\QI},
$$
the latter readily reduces to partial differential equation (\ref{PartEb}).
\end{proof}

\textbf{B)} We now turn to the proof of Proposition \ref{PDE1_Batch}. Multiply each side of equation (\ref{PartEb}) by $v^b$, $b \geq 1$, and then sum all these equations side by side over $b \geq 1$. On account of identities
$$
\sum_{b \geq 1} b \, E_b v^b = v \, \frac{\partial E}{\partial v}, 
\quad  
\sum_{b \geq 1} b \, E_{b-1} v^b = v^2 \frac{\partial E}{\partial v} + v \, E, \quad 
\sum_{b \geq 1} (b-1) \, E_b v^b = v \, \frac{\partial E}{\partial v} - E
$$
at any point $(x,u,v)$, $x \geq 0$, $(u,v) \in \mathbb{D} \times \mathbb{C}$, we then obtain
\begin{align}
& u \frac{\partial^2 E}{\partial u \partial x}(x,u,v) + 
v \frac{\partial^2 E}{\partial v \partial x}(x,u,v) = 
- \frac{\varrho(1-\QI)}{(u-\QI)^2} \biggl[ \QI 
\left ( v \frac{\partial E}{\partial v}(x,u,v) - 
v \frac{\partial E}{\partial v}(x,\QI,v) \right ) \; - 
\nonumber \\
& u \left ( \left \{ 
v \frac{\partial E}{\partial v}(x,u,v) - E(x,u,v) \right \} - 
\left \{ v \frac{\partial E}{\partial v}(x,\QI,v) - E(x,\QI,v) \right \} 
\right ) \biggr] \; +
\nonumber \\
& \frac{u(u-1)(u - \varrho - \QI)}{u - \QI}\frac{\partial E}{\partial u}(x,u,v) + u E(x,u,v) - (1+\varrho)v \frac{\partial E}{\partial v}(x,u,v) \; +
\nonumber \\
& v^2 \frac{\partial E}{\partial v}(x,u,v) + v E(x,u,v).
\nonumber
\end{align}
Reassembling all the factors of derivative $\partial E(x,u,v)/\partial v$ at point $(x,u,v)$ (resp. factors of derivative
$\partial E(x,\QI,v)/\partial v$ at point $(x,\QI,v)$) inside the bracket 
$$
\left [ \QI \left (v \, \frac{\partial E}{\partial v}(x,u,v) - ... \right ) 
- u \left ( \left \{ v \, \frac{\partial E}{\partial v}(x,u,v) - ... \right \} \right ) \right ]
$$
in the left-hand side of the latter equality, these factors eventually gather as  
$$
v(\QI-u)\frac{\partial E}{\partial v}(x,u,v) \qquad \mathrm{(resp. \; as} 
\quad 
v(\QI-u)\frac{\partial E}{\partial v}(x,\QI,v)).
$$
This easily leads to equation (\ref{EQU1_BatchE}), as claimed $\blacksquare$

\subsection{Proof of Lemma \ref{LemJ}}
\label{App1}
Fix $u_0 \in \mathbb{D} \setminus \{U^-(s)\}$. Using the arguments invoked in (\cite{GuiSim18}, Section 9.4), we can assert that the ratio
$(u-U^-(s))/(u_0-U^-(s))$ is non real negative at any point 
$u \in \mathbb{D} \setminus \Lambda_{u_0}$, where $\Lambda_{u_0}$ denotes the line segment starting at point $U^-(s) \in \mathbb{D}$ and directed along the vector $(u_0,U^-(s))$. 


Besides, the ratio $(u-U^+(s))/(u_0-U^+(s))$ is non real negative at any point $u \in \mathbb{D} \setminus \Lambda'_{u_0}$, where $\Lambda'_{u_0}$ denotes the line segment starting at point $U^+(s)$ and directed along the vector $(u_0,U^+(s))$; however, $U^+(s) \notin \mathbb{D}$ after inequalities (\ref{INEQ-U}), therefore 
$\mathbb{D} \cap \Lambda'_{u_0} = \emptyset$ and 
the latter ratio is consequently non negative for any $u \in \mathbb{D}$. We thus conclude that, for any given $u_0 \in \mathbb{D}$, the function 
$\mathfrak{R}(s,u_0;\cdot)$ introduced in (\ref{defR}) is well-defined and analytic in the cut disk $\mathbb{D} \setminus \Lambda_{u_0}$ 
$\blacksquare$

\subsection{Proof of Lemma \ref{LemCAR}}
\label{App2}
\textbf{A)} Let us determine two independent first integrals of the 
3-dimensional differential system (\ref{CAR1}).

\textbf{A.1} The first equation in (\ref{CAR1}) equivalently reads
\begin{equation}
\frac{\QI-u}{P(s,u)} \, \mathrm{d}u = \frac{\mathrm{d}v}{v}.
\label{CAR1a}
\end{equation}
Following the definition (\ref{C+-}) of coefficients $C^\pm(s)$, it readily follows that the rational fraction $(\QI-u)/P(s,u)$ with simple poles at 
$u = U^-(s)$ and $u = U^+(s)$, can be decomposed as
$$
\frac{\QI-u}{P(s,u)} = 
- \, \frac{C^+(s)}{u-U^-(s)} - \, \frac{C^-(s)}{u-U^+(s)} 
= \frac{C^-(s)-1}{u-U^-(s)} +\, \frac{C^+(s)-1}{u-U^+(s)}
$$
after using the identity $C^+(s) + C^-(s) = 1$. Differential equation 
(\ref{CAR1a}) then easily integrates to 
$v = \mathbf{k}_1(u_0,v_0) (u-U^-(s))^{C^-(s)-1}(u-U^+(s))^{C^+(s)-1}$ 
with integration constant $\mathbf{k}_1(u_0,v_0)$ given as in (\ref{DefI0I1}), thus defining the first integral $\mathbf{k}_1$. Given any point 
$(u_0,v_0,z_0)$, the projection on the $(O,Ou,Ov)$-plane of the characteristic curve $\pmb{\gamma}_{u_0,v_0,z_0}$ is independent of $z_0$ and its Cartesian equation is provided by the first integral $\mathbf{k}_1$, that is, 
$\mathbf{k}_1(u,v) = \mathbf{k}_1(u_0,v_0)$ which reduces to (\ref{DefI0I1}), with the function $\mathfrak{R}(s,u_0;\cdot)$ introduced in (\ref{defR}).

\textbf{A.2} Now replacing variable $v$ by the expression (\ref{EquCAR1})  obtained above, the two extreme sides of equation (\ref{CAR1}) give in turn
$$
\frac{L(s,u,u \, v_0 \, \mathfrak{R}(s,u_0;u))}{u} \cdot 
e^{-v_0 \, \mathfrak{R}(s,u_0;u)} \mathrm{d}u = - \mathrm{d}z
$$
which readily integrates to
\begin{align}
\mathbf{k}_2(u_0,v_0,z_0) = & \; z - \int_u^{U^-(s)} \frac{L(s,\zeta,\zeta \, v_0 \, 
\mathfrak{R}_{u_0}(s,\zeta))}{\zeta} \cdot 
e^{-v_0 \, \mathfrak{R}(s,u_0;\zeta)} \mathrm{d}\zeta
\nonumber \\
= & \; z - \int_u^{U^-(s)} 
\frac{L(s,\zeta,\zeta \, v \, \mathfrak{R}(s,u;\zeta))}{\zeta} 
\cdot e^{-v \, \mathfrak{R}(s,u;\zeta)} \mathrm{d}\zeta
\label{DefW1}
\end{align}
after replacing $v_0$ by its expression $v_0 = v/\mathfrak{R}(s,u_0;u)$ provided by (\ref{EquCAR1}) and using the identity  
$\mathfrak{R}(s,u_0;\zeta)/\mathfrak{R}(s,u_0;u) = \mathfrak{R}(s,u;\zeta)$ 
after definition (\ref{defR}). The expression (\ref{DefI0I1}) for the first integral $\mathbf{k}_2$ follows.

\textbf{B)} As $C^-(s) > 1$ by inequalities (\ref{InequC+-}), we readily have  
$\mathfrak{R}(s,u_0;U^-(s)) = 0$ for any 
$u_0 \in \mathbb{D} \setminus \{U^-(s)\}$. Equation (\ref{EquCAR1}) thus entails that the projection of any characteristic curve 
$\pmb{\gamma}_{u_0,v_0,z_0}$ onto the $(O,Ou,Ov)$ plane passes through the fixed point $(U^-(s),0)$ $\blacksquare$

\subsection{Proof of Proposition \ref{CNCL}}
\label{App3}
\textbf{A)} To prove Proposition \ref{CNCL}, consider again the collection of functions $E_b$, $b \geq 1$, introduced in (\ref{defEb}) and define the one-sided Laplace transform $E_b^*$ of each $E_b$ by
\begin{equation}
E_b^*(s,u) = \int_0^{+\infty} E_b(x,u) e^{-sx} \mathrm{d}x, 
\qquad s > 0,
\label{defLaplEb}
\end{equation}
for given $u \in \mathbb{D}$. As in Section \ref{PDEL} for the function change $E^* \mapsto F^*$, we here introduce the function change $E_b^* \mapsto F_b^*$ for each given $b \geq 1$, where the analytic function $F_b^*$ is defined by
\begin{equation}
F_b^*(s,u) = 
\left\{
\begin{array}{ll}
\displaystyle \frac{E_b^*(s,u) - E_b^*(s,\QI)}{u-\QI}, \quad \quad 
s \geq 0, \; u \in \mathbb{D} \setminus \{\QI\},
\\ \\
\displaystyle \displaystyle \frac{\partial E_b^*}{\partial u}(s,\QI), \quad \quad \quad \quad \quad \quad
s \geq 0, \; u = \QI.
\end{array} \right.
\label{defFbTheta}
\end{equation}

\begin{lemma}
\textbf{For $b \geq 1$, function $F_b^*$ can be expressed in terms of 
$E_{b-1}^*$ by}
\begin{align}
F_b^*(s,u) = \frac{1}{u^b \, P(s,u)} \times 
\int_u^{U^-(s)} & \Bigl[ \frac{z + b(1-z)}{(1-z)^2} + b \, E_{b-1}^*(z) \; + 
\nonumber \\
& ([z - b(1+\varrho+s)]E_b^*(s,\QI) \Bigr] \mathfrak{R}(s,u;z)^{b}z^{b-1} \, \mathrm{d}z
\label{Fb*}
\end{align}
\textbf{for $u \in \mathbb{D}$ and where $\mathfrak{R}$ is defined in 
(\ref{defR})}.
\label{ExpressFb*}
\end{lemma}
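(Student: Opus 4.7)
My strategy is to derive a first-order linear ODE in the variable $u$ satisfied by $F_b^*(s,\cdot)$, and then integrate it by variation of constants using the function $\mathfrak{R}$ from (\ref{defR}) as the integrating factor. Concretely, I would first apply the Laplace transform in $x$ to the PDE (\ref{PartEb}) of Lemma \ref{PartialEb}, using $E_b(0,u)=1/(1-u)$ (which follows from $E_{n,b}(0)=1$ via (\ref{PW1})). As in the proof of Corollary \ref{Corollary1}, the coefficient of $\partial_u E_b^*$ in the transformed equation simplifies to $-uP(s,u)/(u-\QI)$. Substituting $E_b^*(s,u)=E_b^*(s,\QI)+(u-\QI)F_b^*(s,u)$ from (\ref{defFbTheta}) and invoking $P(s,\QI)=\varrho(1-\QI)$, all $1/(u-\QI)^k$ singularities cancel and leave a regular first-order linear ODE
\[
u\,P(s,u)\,(F_b^*)'(s,u) + A_b(s,u)\,F_b^*(s,u) = -G(s,u),
\]
with
\[
A_b(s,u)=2u^2-[(b+1)(s+1+\varrho)+\QI]u+b(s\QI+\varrho+\QI)
\]
and
\[
G(s,u)=\frac{u+b(1-u)}{(1-u)^2}+b\,E_{b-1}^*(s,u)+[u-b(1+\varrho+s)]\,E_b^*(s,\QI).
\]
The grouping of the direct source $bE_{b-1}^*(x,u)$ of (\ref{PartEb}) with the boundary piece $bE_{b-1}^*(s,\QI)$ into $bE_{b-1}^*(s,u)$ exploits the identity $(u-\QI)F_{b-1}^*=E_{b-1}^*(s,u)-E_{b-1}^*(s,\QI)$.

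Next, I would introduce the ansatz $F_b^*(s,u)=\psi(s,u)/[u^b P(s,u)]$. The algebraic identity $A_b=bP+uP'-bu(u-\QI)$, verified by direct expansion, collapses the ODE into
\[
\psi'(s,u)-\frac{b(u-\QI)}{P(s,u)}\,\psi(s,u)=-u^{b-1}G(s,u).
\]
Setting $f(u):=(u-U^-(s))^{C^-(s)-1}(u-U^+(s))^{C^+(s)-1}$, the partial fraction decomposition of $(\QI-u)/P(s,u)$ combined with $C^+(s)+C^-(s)=1$ and the identity $(C^-(s)-1)U^+(s)+(C^+(s)-1)U^-(s)=-\QI$ (both consequences of (\ref{C+-})) shows that $(u-\QI)/P(s,u)=-f'(u)/f(u)$. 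The integrating factor is therefore $f(u)^b$, and integration yields $f(u)^b\psi(s,u)=K-\int_0^u f(\xi)^b\,\xi^{b-1}G(s,\xi)\,d\xi$ for some integration constant $K$.

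The constant $K$ is pinned by analyticity: since $P(s,u)$ has a simple zero at $u=U^-(s)\in\mathbb{D}$ while $F_b^*(s,\cdot)$ must be analytic on $\mathbb{D}$, $\psi(s,u)$ has to vanish at $u=U^-(s)$. Since $f(U^-(s))^b=0$ (from $C^-(s)>1$ by (\ref{InequC+-})), one is forced to take $K=\int_0^{U^-(s)}f(\xi)^b\,\xi^{b-1}G(s,\xi)\,d\xi$, so that
\[
\psi(s,u)=\frac{1}{f(u)^b}\int_u^{U^-(s)}f(\xi)^b\,\xi^{b-1}G(s,\xi)\,d\xi=\int_u^{U^-(s)}\mathfrak{R}(s,u;\xi)^b\,\xi^{b-1}G(s,\xi)\,d\xi
\]
via $\mathfrak{R}(s,u;\xi)=f(\xi)/f(u)$, and dividing by $u^b P(s,u)$ yields (\ref{Fb*}). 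The main obstacle is the algebraic reduction producing the clean form $A_b=bP+uP'-bu(u-\QI)$, together with the verification—carried out via the change of variable $\xi=u+t(U^-(s)-u)$, which renders $\mathfrak{R}$ bounded—that $\psi(s,u)=O(U^-(s)-u)$ as $u\to U^-(s)$, so that the simple zero of $\psi$ exactly cancels the simple pole of $1/P(s,u)$.
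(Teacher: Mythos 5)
Your proposal is correct and follows essentially the same route as the paper: take the Laplace transform of (\ref{PartEb}), substitute $E_b^*(s,u) = E_b^*(s,\QI) + (u-\QI)F_b^*(s,u)$ to cancel the $1/(u-\QI)^k$ singularities, obtain a first-order linear ODE in $u$, solve it by variation of constants (the paper) or equivalently via the ansatz $F_b^*=\psi/(u^bP)$ plus the integrating factor $f(u)^b$ (you), and pin the integration constant by analyticity at $u=U^-(s)$ using $C^-(s)>1$. (One narrative nit: there is no ``boundary piece $bE_{b-1}^*(s,\QI)$'' to group — the source term $bE_{b-1}^*(s,u)$ comes directly from the Laplace transform of $bE_{b-1}(x,u)$ in (\ref{PartEb}) and requires no further manipulation; your stated $G$ is nonetheless correct.)
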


\begin{proof}
Recall by Lemma \ref{PartialEb} that functions $E_b$ verify the partial differential system (\ref{PartEb}). To relate this system to function $F_b^*$, $b \geq 1$, take the Laplace transform of each side of equation (\ref{PartEb}) for given $u \in \mathbb{D}$; noting that $E_b(0,u) = 1/(1-u)$ after condition 
(\ref{PW1}) and following a pattern similar to the proof of Corollary 
\ref{Corollary1}, we derive the first order system  
\begin{align}
- & \frac{u \, P(s,u)}{u-q} \, \frac{\partial E_b^*}{\partial u}(s,u) + 
\left[b(1+\varrho+s) - u\right] E_b^*(s,u) \; - 
\nonumber \\
& \, \frac{\varrho(1-\QI)(b\QI - (b-1)u)}{(u-\QI)^2} 
\left[ E_b^*(s,\QI) - E_b^*(s,u) \right] \; =
\nonumber \\
& \, \frac{u + b(1-u)}{(1-u)^2} + b \, E_{b-1}^*(s,u) \mathbf{1}_{b \geq 2}, 
\qquad u \in \mathbb{D},
\label{EbL}
\end{align}
for all transforms $E_b^*$, $b \geq 1$. Expressing each equation 
(\ref{EbL}) in terms of $F_b^*$ after (\ref{defFbTheta}) then cancels out all denominators in $1/(u-\QI)$ or $1/(u-\QI)^2$ and we then eventually obtain
\begin{align}
& u \, P(s,u) \frac{\partial F_b^*}{\partial u}(s,u) + Q_b(s,u) F_b^*(s,u) \; = 
\nonumber \\
& - \frac{u + b(1-u)}{(1-u)^2} - b E_{b-1}^*(s,u) \mathbf{1}_{b \geq 2} - 
\left [ u - b(1+\varrho+s) \right ] E_b^*(s,\QI)
\label{FbL}
\end{align}
after some algebraic reduction and where $Q_b(s,u)$ denotes the quadratic polynomial 
$Q_b(s,u) = P(s,u) + u^2 - b(1+\varrho+s)u + (b-1)(s\QI + \varrho + \QI)$. For given $s > 0$, we now solve the first order differential equation (\ref{FbL}) for $F_b^*$; noting that 
$$
- \frac{Q_b(s,u)}{u \, P(s,u)} = 
- \frac{b}{u} + \frac{b-1-b \, C^+(s)}{u-U^+(s)} + 
\frac{b-1-b \, C^-(s)}{u-U^-(s)}
$$
after standard algebra and the use of definition (\ref{C+-}) for constants 
$C^\pm(s)$ together with the relation $C^+(s) + C^-(s) = 1$, the homogeneous differential equation $u P(s,u) \partial_u F(s,u) + Q_b(s,u) F(s,u) = 0$ associated with (\ref{FbL}) has the general solution $F(s,\cdot)$ given by 
$F(s,u) = K \times u^{-b}
(u-U^+(s))^{b-1-b \, C^+(s)}(u-U^-(s))^{b-1-b \, C^-(s)}$ 
for any multiplicative constant $K$; using the method of the variation of constant $K$, the general solution to the full equation (\ref{FbL}) is easily derived as
\begin{align}
F_b^*(s,u) = & \, K_0 \times 
u^{-b}(u-U^+(s))^{b-1-b \, C^+(s)}(u-U^-(s))^{b-1-b \, C^-(s)} \; + 
\nonumber \\
& \, \frac{1}{u^b \, P(s,u)} 
\int_u^{U^-(s)} \Bigl[ \frac{z + b(1-z)}{(1-z)^2} + b \, E_{b-1}^*(z) \; + 
\nonumber \\
& \qquad \qquad \qquad 
([z - b(1+\varrho+s)]E_b^*(s,\QI) \Bigr] \mathfrak{R}(s,u;z)^{b}z^{b-1} \, \mathrm{d}z
\nonumber
\end{align}
for all $u \in \mathbb{D}$ and some constant $K_0$. Now, the analyticity of this solution $F_b^*$ at point $u = U^-(s) \in \mathbb{D}$ requires that this constant $K_0$ be zero, and expression (\ref{Fb*}) then follows. 
\end{proof}

\textbf{B)} Using Lemma \ref{ExpressFb*}, we now turn to the proof of Proposition \ref{CNCL}. The respective definitions (\ref{defFTheta}) and 
(\ref{defFbTheta}) of $F^*$ and $F_b^*$, $b \geq 1$, imply that $F^*$ is analytic at $u = 0$ if and only if all functions $F_b^*$, $b \geq 1$, are analytic at $u = 0$. Now, expression (\ref{Fb*}) shows that the pole at 
$u = 0$ is a false singularity if and only if the integral vanishes at 
$u = 0$, which translates to 
\begin{align}
& \int_0^{U^-(s)} \left[ \frac{z + b(1-z)}{(1-z)^2} + b \, E_{b-1}^*(z) \right ] \mathfrak{R}(s,0;z)^{b}z^{b-1} \, \mathrm{d}z \; = 
\nonumber \\ 
& - \; E_b^*(s,\QI) \int_0^{U^-(s)} [z - b(1+\varrho+s)] 
\mathfrak{R}(s,0;z)^{b}z^{b-1} \, \mathrm{d}z, \qquad b \geq 1.
\nonumber
\end{align}
By means of (\ref{defFbTheta}), $E_{b-1}^*$ can be expressed in terms of 
$F_{b-1}^*$ so that the latter condition equivalently reads
\begin{align}
\int_0^{U^-(s)} \left[ \frac{z + b(1-z)}{(1-z)^2} + b \, (z-\QI) F_{b-1}^*(z) 
+ b \, E_{b-1}^*(s,\QI) \right ] 
\mathfrak{R}(s,0;z)^{b}z^{b-1} \, \mathrm{d}z \; = 
\nonumber \\ 
- \; E_b^*(s,\QI) \int_0^{U^-(s)} [z - b(1+\varrho+s)] 
\mathfrak{R}(s,0;z)^{b}z^{b-1} \, \mathrm{d}z, \qquad b \geq 1.
\label{ConditionFb}
\end{align}
To gather this infinite set of conditions, multiply each side of equation 
(\ref{ConditionFb}) by $v^b/b!$, $v \in \mathbb{C}$, and sum all the obtained equalities over index $b \geq 1$; using the definitions
$E^*(s,\QI,v) = \sum_{b \geq 1} E_b^*(s,\QI) v^b/b!$ and 
$F^*(s,u,v) = \sum_{b \geq 1} F_b^*(s,u) v^b/b!$ of generating functions $E^*$ and $F^*$,  
we then get
\begin{align}
& \int_0^{U^-(s)} \Bigl[ \frac{e^{z \, \mathfrak{R}(s,0;z) \, v}-1}{(1-z)^2} + 
\frac{v}{1-z} \, \mathfrak{R}(s,0;z) e^{z \, \mathfrak{R}(s,0;z) \, v} \; + (z-\QI) \mathfrak{R}(s,0;z) \, v \, \times 
\nonumber \\
& F^*(s,z, z \, \mathfrak{R}(s,0;z) \, v) + \mathfrak{R}(s,0;z) \, v \, E^*(s,\QI,z \, \mathfrak{R}(s,0;z) \, v) \Bigr] \mathrm{d}z = \, -
\nonumber \\ 
& \int_0^{U^-(s)} \Bigl[ E^*(s,\QI,z \, \mathfrak{R}(s,0;z) \, v) - (1+\varrho+s)\mathfrak{R}(s,0;z) \, v \, \frac{\partial E^*}{\partial v}(s,\QI,z \, \mathfrak{R}(s,0;z) \, v) \Bigr] \mathrm{d}z
\nonumber
\end{align}
for all $v \in \mathbb{C}$. Now using the explicit definition (\ref{defL}) of function $L$ in terms of $E^*(s,\QI,\cdot)$ and the first order derivative 
$\partial_v E^*(s,\QI,\cdot)$, the latter relation can be easily recast in the form
\begin{align}
& \int_0^{U^-(s)} L(s,z,z \, \mathfrak{R}(s,0;z) \, v) \, 
\frac{\mathrm{d}z}{z} = \; - 
\nonumber \\
& \int_0^{U^-(s)} (z-\QI) \mathfrak{R}(s,0;z) \, v 
F^*(s,z,z \, \mathfrak{R}(s,0;z) \, v) \, \mathrm{d}z, 
\qquad v \in \mathbb{C}.
\label{ConditionL}
\end{align}
At this stage, we can further invoke the integral expression (\ref{IntGen0}) of $F^*$ to express the term $F^*(s,z,z \, \mathfrak{R}(s,0;z) \, v)$ of the right-hand side of (\ref{ConditionL}) in terms of $L$, giving
\begin{align}
F^*(s,z,z \, \mathfrak{R}(s,0;z) \, v) = \; & 
\frac{e^{\mathfrak{R}(s,0;z) \, v}}{P(s,z)} \; \times 
\nonumber \\
& \int_{z}^{U^-(s)} L(s,\zeta,\zeta \, \mathfrak{R}(s,0;z) \, v \, 
\mathfrak{R}(s,z;\zeta)) e^{-\mathfrak{R}(s,0;z) \, v \, 
\mathfrak{R}(s,z;\zeta)} \frac{\mathrm{d}\zeta}{\zeta} 
\nonumber \\
= \; & \frac{e^{\mathfrak{R}(s,0;z) \, v}}{P(s,z)} 
\int_{z}^{U^-(s)} L(s,\zeta,\zeta \, \mathfrak{R}(s,0;\zeta) \, v) 
e^{-\mathfrak{R}(s,0;\zeta) \, v} \, \frac{\mathrm{d}\zeta}{\zeta}
\label{ConditionL1}
\end{align}
after noting that 
$\mathfrak{R}(s,0;z)\mathfrak{R}(s,z;\zeta) = \mathfrak{R}(s,0;\zeta)$. As a consequence of (\ref{ConditionL1}), the right-hand side of (\ref{ConditionL}) now reads in the form
\begin{align}
& \int_0^{U^-(s)} (z-\QI) \, \mathfrak{R}(s,0;z) \, v \, 
F^*(s,z,z \, \mathfrak{R}(s,0;z) \, v) \, \mathrm{d}z \; =
\nonumber \\
& \int_0^{U^-(s)} (z-\QI) \mathfrak{R}(s,0;z) v 
\left ( \frac{e^{\mathfrak{R}(s,0;z) v}}{P(s,z)} 
\int_{z}^{U^-(s)} L(s,\zeta,\zeta \mathfrak{R}(s,0;\zeta) v) 
e^{-\mathfrak{R}(s,0;\zeta) v} \frac{\mathrm{d}\zeta}{\zeta} \right ) 
\mathrm{d}z;
\nonumber
\end{align}
interchanging the order of integration in the right-hand side of the latter equality, we obtain
\begin{align}
& \int_0^{U^-(s)} (z-\QI) \, \mathfrak{R}(s,0;z) \, v \, 
F^*(s,z,z \, \mathfrak{R}(s,0;z) \, v) \, \mathrm{d}z \; =
\nonumber \\
& v \, \int_0^{U^-(s)} H(s,\zeta,v) \, 
L(s,\zeta,\zeta \mathfrak{R}(s,0;\zeta) v) \, \frac{\mathrm{d}\zeta}{\zeta}
\label{ConditionL2}
\end{align}
where we set
$$
H(s,\zeta,v) = e^{-\mathfrak{R}(s,0;\zeta) \, v} 
\int_0^\zeta (z-\QI) \, \frac{\mathfrak{R}(s,0;z)}{P(s,z)} \,  
e^{\mathfrak{R}(s,0;z) \, v} \, \mathrm{d}z.
$$
After (\ref{ConditionL2}), relation (\ref{ConditionL}) can therefore be written in the form
\begin{align}
& \int_0^{U^-(s)} L(s,z,z \, \mathfrak{R}(s,0;z) \, v) \, 
\frac{\mathrm{d}z}{z} \; = 
\nonumber \\
& - v \, \int_0^{U^-(s)} H(s,\zeta,v) \, 
L(s,\zeta,\zeta \, \mathfrak{R}(s,0;\zeta) \, v) \, 
\frac{\mathrm{d}\zeta}{\zeta}, \qquad v \in \mathbb{C}.
\label{ConditionL3}
\end{align}
The kernel $H$ introduced in (\ref{ConditionL2}) can be actually explicitly calculated. In fact, expression (\ref{defR}) yields
\begin{equation}
\frac{\mathrm{d}}{\mathrm{d}u}\mathfrak{R}(s,u_0;u) = 
\frac{\mathfrak{R}(s,u_0;u)}{(u-U^-(s))(u-U^+(s))} (\QI - u)
\label{DerivR}
\end{equation}
(where we have used the identity
$-(C^-(s)-1)U^+(s)-(C^+(s)-1)U^-(s) = \QI 
$ 
easily derived from the definition (\ref{C+-}) of exponents $C^+(s)$ and 
$C^-(s)$); it then follows from (\ref{DerivR}) that
$e^{\mathfrak{R}(s,0;z) \, v} \, v \, 
\mathfrak{R}(s,0;z)(\QI - z)/P(s,z) = 
\mathrm{d} \left [ e^{\mathfrak{R}(s,0;z) \, v}\right ] /\mathrm{d}z$ 
is an exact derivative, hence
$$
H(s,\zeta,v) = e^{-\mathfrak{R}(s,0;\zeta) \, v}  \times 
\left [ - \frac{e^{\mathfrak{R}(s,0;z) \, v}}{v} \right ]_{z = 0}^{z = \zeta} 
= - \, \frac{1-e^{-\mathfrak{R}(s,0;\zeta) \, v}}{v}.
$$
Substituting this expression of $H(s,\zeta,v)$ in the right-hand side of 
(\ref{ConditionL3}), the latter readily reduces to condition (\ref{CNC0}) on function $L$, as claimed $\blacksquare$

\end{document}